\documentclass{amsart}

\usepackage{fullpage}
\usepackage{amsmath}
\usepackage{amsfonts}
\usepackage{mathtools}
\usepackage{amscd,amssymb,color}
\usepackage{hyperref}

\usepackage{algorithm}
\usepackage{algpseudocode}

\usepackage{tikz}
\usepackage{verbatim}
\usetikzlibrary{matrix}

\usepackage{amsthm}

\numberwithin{equation}{section}

\newtheorem{lemma}[equation]{Lemma}
\newtheorem{definition}[equation]{Definition}
\newtheorem{theorem}[equation]{Theorem}
\newtheorem{proposition}[equation]{Proposition}
\newtheorem{corollary}[equation]{Corollary}
\theoremstyle{remark}
\newtheorem{remark}[equation]{Remark}

\newcommand{\dpp}{\tilde d_{\mathcal{A},p}}
\newcommand{\dqq}{\tilde d_{\mathcal{A},q}}
\newcommand{\dinf}{\tilde d_{\mathcal{A},\infty}}
\renewcommand{\d}{d_{\mathcal{GH}}}

\bibliographystyle{plain}

\begin{document}

\title{A polynomial-time relaxation of the Gromov-Hausdorff distance}

\author[Villar]{Soledad Villar}
\address{Department of Mathematics, University of Texas,
Austin, TX \ 78703}
\email{mvillar@math.utexas.edu}

\author[Bandeira]{Afonso S. Bandeira}
\address{Department of Mathematics and Center for Data Science, Courant Institute of Mathematical Sciences, New York University, 
New York, NY \ 10012}
\email{bandeira@cims.nyu.edu}

\author[Blumberg]{Andrew J. Blumberg}
\address{Department of Mathematics, University of Texas,
Austin, TX \ 78703}
\email{blumberg@math.utexas.edu}

\author[Ward]{Rachel Ward}
\address{Department of Mathematics, University of Texas,
Austin, TX \ 78703}
\email{rward@math.utexas.edu}

\thanks{A.~J.~Blumberg and S.~Villar were supported in part by AFOSR grant FA9550-15-1-0302.}

\thanks{R. Ward and S. Villar were supported in part by NSF CAREER grant $\#1255631$}

\thanks{Part of this work was done while A.~S.~Bandeira was with the Mathematics Department at MIT and supported by NSF Grant DMS-1317308}

\begin{abstract}
The Gromov-Hausdorff distance provides a metric on the set of isometry
classes of compact metric spaces.  Unfortunately, computing this
metric directly is believed to be computationally intractable.  Motivated by
applications in shape matching and point-cloud comparison, we study a
semidefinite programming relaxation of the Gromov-Hausdorff metric.
This relaxation can be computed in polynomial time, and somewhat
surprisingly is itself a pseudometric.  We describe the induced
topology on the set of compact metric spaces.  Finally, we demonstrate
the numerical performance of various algorithms for computing the
relaxed distance and apply these algorithms to several relevant
data sets. In particular we propose a greedy algorithm for finding the best correspondence between finite metric spaces that can handle hundreds of points.
\end{abstract}

\maketitle

\section{Introduction}

In order to study the convergence of sequences of metric spaces,
Gromov introduced what is now called the Gromov-Hausdorff
metric~\cite{gromov81}.  Roughly speaking, this metric generalizes the
classical Hausdorff distance between subsets of an ambient metric
space, to a pair of arbitrary metric spaces by taking the infimum over
all embeddings.  The Gromov-Hausdorff metric has been of theoretical
importance in geometric group theory and is at the heart of the
subject of ``metric geometry''.

More recently, the Gromov-Hausdorff distance has been proposed as a basic
method for comparing {\em point clouds}~\cite{memoli05}.  A point
cloud is simply a finite metric space (often presented as a subset
of $\mathbb{R}^n$); this is a fundamental and ubiquitous
representation of data.  Geometric examples, where the point cloud
represents samples from some smooth geometric object, arise from
various kinds of shape acquisition devices.  Examples with less
obvious intrinsic geometric structure are frequently generated by
biological data (e.g., collections of gene expression vectors).  Given
two point clouds, a natural question is to determine if they are
related by some isometric transformation; if not, one might wish to
know a quantitative measure of their difference.

Another version of this sort of problem is what is known as the
point registration problem (also sometimes referred to as point
matching and network alignment).  Point registration consists in
finding a correspondence between point sets or graphs such that a
certain cost function is minimized. It appears in computer vision
problems like shape matching, computational biology
\cite{Clark02052014}, and general pattern recognition problems.
In some applications, registering or aligning is particularly
challenging since there is no explicit correspondence between the
sets, often because deformation has occurred or they have different
numbers of points.  In such cases it is natural to consider a metric on
point clouds that is defined in terms of correspondences between point
clouds together; the Gromov-Hausdorff distance can be described in terms
of a minimax expression over correspondences between the metric
spaces, and so is potentially suitable for this purpose.

Unfortunately, exact computation of the Gromov-Hausdorff distance is
essentially intractable; it involves the solution of an NP-hard
optimization problem.  As a consequence, it is natural to consider
relaxations.  In~\cite{memoli11}, M\'emoli studied a relaxation
referred to as the Gromov-Wasserstein distance --- this distance is
closely related to distances motivated by optimal transport
problems~\cite{LottVillani, Sturm}, to a ``distance distribution''
metric defined by Gromov, and also to the cut distance of graphons.
Unfortunately, computing the Gromov-Wasserstein distance still
requires solving a non-convex optimization problem which does not
appear to have attractive performance characteristics in practice.

In this paper, we study a semidefinite programming (SDP) relaxation of
the Gromov-Hausdorff distance; this yields a tractable convex optimization
problem.  We prove that this relaxation defines a pseudometric on
point clouds that can be computed in polynomial time.  Our
pseudometric also provides a relaxed correspondence between point
clouds and a lower bound for the Gromov-Hausdorff distance.  A
similar version of this SDP was recently introduced in \cite{kezurer15} and further studied in \cite{Lipman} and \cite{Lipman16};
our work provides theoretical validation for some of the computational 
phenomena observed therein and complements their theoretical framework.  We describe the performance of optimized
solvers for this SDP.  We also propose a non-convex optimization
algorithm to approach the registration problem efficiently. The output
of this algorithm not only provides a local optimum for the
registration problem, but also an upper bound for the Gromov-Hausdorff
distance.

\section{Background}

We begin by recalling the definition of the Gromov-Hausdorff distance;
for this, we start with the Hausdorff distance.  Let $(Z,d)$ a compact
metric space and $\mathcal C(Z)$ the collection of all compact sets in
$Z$.  If $A,B \in \mathcal C(Z)$, the Hausdorff distance between $A$
and $B$ can be expressed as
\[
d_{\mathcal H}^Z(A,B)= \inf_{R\in\mathcal R(A,B)}\sup_{(a,b)\in R}
d(a,b)
\]
where $\mathcal R (A,B)$ is the set of correspondences in
$R\subset A \times B$ such that every element $a\in A$ is related to
at least one element in $B$ and every element $b\in B$ is related at
least one element in $A$. For many theoretical and practical applications
it is common to relax such distance to the Wasserstein distance
\cite{villani}. In that setting, one endows $\mathcal C(Z)$ with a
measure, $$\mathcal C_{w}(Z)=\{(A,\mu_A)\colon A\in \mathcal C(Z), \;
\operatorname{supp}(\mu_A)=A\},$$ and relaxes the set of
correspondences $\mathcal R(A,B)$ to the set of transportation plans
\[
\mathcal M(\mu_A,\mu_B)=\{\mu\colon \mu(A_0\times B)=\mu_A(A_0), \;
\mu(A\times B_0)=\mu_B(B_0), \text{ for all Borel sets } A_0\subset A,
\,B_0\subset B \}.
\]
Then, for $A,B\in \mathcal C_w(Z)$, the
Wasserstein distance is defined for $1\leq p \leq \infty$ as
\[
d_{\mathcal W, p}^Z(A,B)=\inf_{\mu\in \mathcal
  M(\mu_A,\mu_B)}\left(\int_{A\times B}d^p(a,b)d\mu(a,b)\right)^{1/p}
\quad \text {for } 1\leq p < \infty
\]
\[
d_{\mathcal W, \infty}^Z(A,B)=\inf_{\mu\in \mathcal
  M(\mu_A,\mu_B)}\sup_{(a,b)\in \operatorname{supp}(\mu)}d(a,b).
\] 
For $A,B$ finite sets this distance can be efficiently computed by a
linear program.

The Hausdorff distance suffices to compare metric spaces embedded in
a common ambient metric space; Gromov's idea to extend this to compare
arbitrary metric spaces is simply to consider the infimum over all
isometric embeddings into a common metric space~\cite{gromov}.
Specifically, if $X,Y$ are compact metric spaces, the Gromov-Hausdorff
distance is defined as 
\[
d_{\mathcal{GH}}(X,Y)=\inf_{Z,f,g}d_{\mathcal H}^Z(f(X),g(Y))
\]
where $f\colon X\to Z$ and $g\colon Y \to Z$ are isometric embeddings
into $Z$, a metric space.  Unfortunately, it is an NP-hard problem to
compute this distance.

Since the Hausdorff distance becomes computationally tractable when
relaxed to the Wasserstein distance, one might consider a
transport-based relaxation of the Gromov-Hausdorff distance that works
in the setting of metric measure spaces.  In a series of papers
\cite{memoli11, memoli07, memoli05} M\'emoli considers different
equivalent expressions for the Gromov-Hausdorff distance, and by
relaxing them and considering them in the measure metric space
setting, he obtains different \emph{gromovizations} of the Wasserstein
distance, called Gromov-Wasserstein distances.  A particularly natural
relaxation is based on the observation that the Gromov-Hausdorff
distance can be expressed as: 
\begin{equation} \label{gh_cont}
d_{\mathcal{GH}}(X,Y)=\frac{1}{2} \inf_{R\in\mathcal R(X,Y)} \sup_{\begin{subarray}{c} x,x'\in X \\ y,y'\in Y \\ (x,y),(x',y')\in R  \end{subarray}} \Gamma_{X,Y}(x,y,x',y')
\end{equation}
where $\Gamma_{X,Y}(x,y,x',y')=|d_X(x,x')-d_Y(y,y')|$. 
For $1\leq p\leq \infty$ M\'emoli then defines Gromov-Wasserstein
relaxations of the Gromov-Hausdorff distance as   
\begin{equation} \label{gw_cont}
D_{p}(X,Y)=\frac{1}{2} \inf_{\mu\in \mathcal M(\mu_X,\mu_Y)}\left( \int_{X\times Y}\int_{X\times Y}\left(\Gamma_{X,Y}(x,y,x',y')\right)^p \mu(dx\times dy)\mu(dx'\times dy') \right)^{1/p}
\end{equation}
\begin{equation} \label{gw_infinity}
D_{\infty}(X,Y)=\frac{1}{2} \inf_{\mu\in \mathcal M(\mu_X,\mu_Y)} \sup_{\begin{subarray}{c} x,x'\in X \\ y,y'\in Y \\ (x,y),(x',y')\in \operatorname{supp}(\mu)  \end{subarray}}\Gamma_{X,Y}(x,y,x',y')
\end{equation}

In his work, M\'emoli studies topological properties of the different
distance relaxations and how they compare with each other and with the
Gromov-Hausdorff distance. He also proposes an algorithm to
approximate $D_p$, but due the non-convexity of its objective
function, no performance guarantees are provided. Recent work \cite{2016-peyre-icml} has provided efficient heuristic algorithms based on optimal transportation to approximate the Gromov-Wasserstein distance for alignment applications. 

\begin{remark}
Gromov considered another metric on the set of metric measure spaces
defined in terms of the convergence of all distance matrix
distributions (i.e., the distributions induced by taking the
pushforward of the measure to a collection of $n$ points and applying
the metric to all pairs).  It turns out that this metric is closely
related to the Gromov-Wasserstein distance~\cite[3.7]{Sturm}.
Moreover, these metrics induce the same notion of convergence as
arises in the theory of dense graph sequences and graphons.
Specifically, we can regard a graph as a metric measure space; the
underlying metric space has points the set of vertices and pairwise
distances $\frac{1}{2}$ if the points are connected and $1$ otherwise,
and the measure assigns equal mass to each point.  (See~\cite{Elek}
for further discussion of this point).
\end{remark}

\section{Semidefinite relaxations of Gromov-Wasserstein and Gromov-Hausdorff distances}
\renewcommand{\theenumi}{\alph{enumi}}

Consider the setting where $X$ and $Y$ are finite metric spaces (or metric measure spaces), say $X=\{x_1,\ldots, x_n\}$ and $Y=\{y_1,\ldots, y_m\}$ (with measures $\mu_X(x_i)=\nu_i$ and $\mu_Y(y_j)=\lambda_j$). 
Let us abbreviate $\Gamma_{X,Y}(x_i,y_j,x_{i'},y_{j'})$ as $\Gamma_{ij,i'j'}$ for $i,i'=1,\ldots, n$ and $j,j'=1,\ldots,m$. The formulation of the Gromov-Hausdorff distance given in equation~\eqref{gh_cont} can be expressed as a quadratic assignment (Remark 3 in \cite{memoli07}):
\begin{align}\label{gh}
d_{\mathcal{GH}}(X,Y)=& \frac{1}{2} \min_{\mu}\max_{ij,i'j' } \Gamma_{ij,i'j'} \mu_{ij}\mu_{i'j'}\quad  \text{ subject to } \mu_{ij} \in \{0,1\} , \;\; \sum_{j=1}^m \mu_{ij}\geq 1, \;\; \sum_{i=1}^n \mu_{ij}\geq 1 
\end{align}
and the expressions \eqref{gw_cont} and \eqref{gw_infinity} can be written as \eqref{gw} and \eqref{gwi} respectively:
\begin{equation} \label{gw}
D_p(X,Y)=\frac12\min_{\mu\in \mathbb R^{n\times m}} \left( \sum_{i,i'=1}^n\sum_{j,j'=1}^m \mu_{ij}\mu_{i'j'} \Gamma_{ij,i'j'}^p\right)^{1/p}  \text{subject to } 0\leq \mu_{ij} \leq 1,  \sum_{i=1}^n\mu_{ij}=\lambda_j,  \sum_{j=1}^m \mu_{ij}=\nu_i
\end{equation}
\begin{equation} \label{gwi}
D_\infty(X,Y)=\frac12\min_{\mu\in \mathbb R^{n\times m}} \max_{\stackrel{i,i',j,j'}{\mu_{ij}\mu_{i'j'}\neq 0} }  \Gamma_{ij,i'j'} \,\text{ subject to } 0\leq \mu_{ij} \leq 1,  \sum_{i=1}^n\mu_{ij}=\lambda_j,  \sum_{j=1}^m \mu_{ij}=\nu_i
\end{equation}

In order to approach non-convex optimization problems like \eqref{gh}, \eqref{gw} or \eqref{gwi}, one standard technique is to linearize the objective by lifting  $\mu_{ij}\mu_{i'j'}$ and $\mu_{ij}$  to a symmetric variable $\mathbf Z \in R^{nm+1 \times nm+1}$ whose entries entries are indexed by pairs $(ij,i'j')$,$(ij,nm+1)$, $(nm+1, i'j')$ and $(nm+1,nm+1)$ with $i,i'=1,\ldots n$ and $j,j'=1,\ldots,m$.
\begin{equation}
\label{Z}
\mathbf Z=\left[ \begin{array}{c c} \hat{\mathbf Z }& \mathbf z \\ \mathbf z^\top &1 \end{array} \right].
\end{equation}

Then note that, for instance, the problems \eqref{gw} and \eqref{gwi} are equivalent to problems \eqref{gw_lift} and \eqref{gwi_lift} respectively:
\begin{align} \label{gw_lift}
D_p(X,Y)=\frac{1}{2}\left(\min_{\mathbf Z} \operatorname{Trace}(\Gamma^{(p)} \hat{\mathbf Z})\right)^{1/p}\quad \text{ subject to } \mathbf Z \in \mathcal S 
\\
D_\infty(X,Y)=\frac{1}{2}\min_{\mathbf Z} \max_{i,i',j,j'\colon \mathbf{Z}\neq 0} \Gamma_{ij,i'j'}  \quad \text{ subject to } \mathbf Z \in \mathcal S \label{gwi_lift}
\end{align}
where $\mathcal S = \{ \mathbf Z \in \mathbb R^{nm+1\times nm+1}\colon \sum_{i}\mathbf Z_{ij,nm+1}=\lambda_j,\, \sum_{j}\mathbf Z_{ij,nm+1}=\nu_i, \,
{\mathbf Z}={\mathbf Z}^\top,\,
{\mathbf Z}\geq 0,\, \operatorname{rank}({\mathbf Z})=1 \}$ and $\Gamma^{(p)}$ denotes the $p$-th power of the matrix $\Gamma$ entrywise.

The constraint $\operatorname{rank}(\mathbf Z) =1$ can be relaxed to the convex constraint ${\mathbf Z} \succeq 0$ (which means ${\mathbf Z}$ is symmetric and positive semidefinite) and additional linear constraints satisfied by the rank 1 matrix can be added to make the relaxation tighter.

Using this recipe we can construct the following family of semidefinite programming relaxations of the Gromov-Wasserstein and Gromov-Hausdorff distances.

\begin{align} \label{gh_p}
\tilde d_{\mathcal{A},p}(X,Y)=\frac12 \left( \frac1{n^2}\min_{\mathbf Z}\operatorname{Trace}(\Gamma^{(p)}{\hat{\mathbf Z}} ) \right)^{1/p}\quad \text{ subject to } 
\mathbf Z \in \mathcal{ A}
\\
\tilde d_{\mathcal A, \infty}(X,Y)=\frac12\min_{\mathbf Z } \max_{i,j,i',j'\colon \mathbf Z \neq 0} \Gamma_{ij,i'j'}\quad \text{ subject to } \label{gh_inf}
\mathbf Z \in {\mathcal A}
\end{align}
where we can consider different convex sets $\mathcal A$ as relaxing to different distances.

\begin{enumerate}

\item For a relaxation of the Gromov-Hausdorff distance (or Gromov-Wasserstein for uniform weights $\lambda_j=\nu_i=1/\max\{n,m\}$ for all $j=1,\ldots,m$ and $i=1,\ldots,n$)\footnote{By appropriately choosing the right hand side of the equality constraints in $\mathcal{GH}$ one can obtain a semidefinite relaxation of Gromovo-Wasserstein distance for any weights.} consider 
\begin{multline*}
{\mathcal A}=\mathcal{GH}\colon=\{ \mathbf Z \in \mathbb R^{nm+1\times nm+1}\;\colon\quad \sum_{i}{\mathbf Z}_{ij,nm+1}\geq 1,\quad
\sum_{j}{\mathbf Z}_{ij,nm+1}\geq 1, \quad 
\sum_{i,i'} {\mathbf Z}_{ij,i'j'}\geq 1, \; \\
\sum_{j,j'} {\mathbf Z}_{ij,i'j'}\geq 1, \quad
\mathbf{ \hat Z 1}=\max\{n,m\} \mathbf z, \quad
0\leq {\mathbf Z}\leq 1,\quad 
{\mathbf Z}\succeq 0\}.
\end{multline*}
Relaxation \eqref{gh_inf} provides a lower bound for the Gromov-Hausdorff distance, since every element of $\mathcal R(X,Y)$ induces, up to normalization, a feasible $\mathbf Z$. In fact, if the optimal solution of equation~\eqref{gh_cont} corresponds to $R\in\mathcal R(X,Y)$ such that $(x_i,y_j),(x_i,y_{j'})\in R$ for some $j\neq j'$, the solution of equation~\eqref{gh_p} may split the mass in a way so $\mathbf Z_{ij,nm+1}+\mathbf Z_{ij',nm+1}= 1$ instead of having $\mathbf Z_{ij,nm+1}=\mathbf Z_{ij',nm+1}=1$.

\item If $|X|=|Y|$ we may want to restrict the set of all correspondences between $X$ and $Y$ (where every element of $X$ is related to at least one element in $Y$ and vice versa) to the set of all bijective correspondences. In that case we can consider a tighter relaxation, that relaxes the registration problem and is similar to the one in \cite{kezurer15}.
\begin{multline*}
\mathcal A=\operatorname{Reg}\colon= \{
 \mathbf Z \in \mathbb R^{n^2+1\times n^2+1}\colon
\displaystyle\sum_{i=1}^n{\mathbf Z}_{ij,n^2+1}= 1, \quad 
\displaystyle\sum_{j=1}^n{\mathbf Z}_{ij,n^2+1}= 1, \quad
\mathbf{Z}_{n^2+1,n^2+1}=1,
\\
\displaystyle\sum_{i,i'=1}^n \mathbf{Z}_{ij,i'j'}= 1, \quad 
\displaystyle\sum_{j,j'=1}^n \mathbf{Z}_{ij,i'j'}= 1, \quad
\mathbf Z_{ij,ij'}=0 \text { if } j\neq j', \quad
\mathbf Z_{ij,ij'}=0 \text { if } i\neq i', 
\\
\operatorname{Trace}({\mathbf Z})=n+1, \quad
\mathbf{ \hat Z 1}=n \mathbf z, \quad
0\leq {\mathbf Z}\leq 1, \quad
\mathbf Z\succeq 0 
\}
\end{multline*}

\item The registration relaxation can be extended to finite metric spaces with different numbers of points. Let as before $|X|=n$ and $|Y|=m$. First, without loss of generality assume that $n\geq m$. Now consider the problem \eqref{gh_cont} where the set $\mathcal R(X,Y)$ is restricted to surjective functions $X\to Y$. Then relax the feasible set to the convex set:
 \begin{multline*}\mathcal A= \operatorname{Sur}\colon=\{ \mathbf Z \in \mathbb R^{nm+1\times nm+1}\colon
\displaystyle\sum_{i=1}^n{\mathbf Z}_{ij,nm+1}\geq 1, \quad
\displaystyle\sum_{j=1}^m{\mathbf Z}_{ij,nm+1}= 1, \quad
\mathbf{Z}_{nm+1,nm+1}=1,
\\
\displaystyle\sum_{i,i'=1}^n \mathbf{Z}_{ij,i'j'}\geq 1, \quad
\displaystyle\sum_{j,j'=1}^m \mathbf{Z}_{ij,i'j'}= 1, \quad
\mathbf Z_{ij,ij'}=0 \text { if } j\neq j', 
\\
\operatorname{Trace}({\mathbf Z})=n+1,\quad
\mathbf{ \hat Z 1}=n \mathbf z, \quad
0\leq {\mathbf Z}\leq 1, \;\;
\mathbf Z\succeq 0, 
\}
\end{multline*}
Note that the set of constraints assumes that $i,i'=1,\ldots n$, $j,j'=1,\ldots m$ and $n\geq m$ and it is not symmetric with respect to $i$ and $j$. Also note that under this relaxation, there exist sets that $\d(X,Y)=0$ but the relaxed distance \eqref{gh_p} with $\mathcal A=\operatorname{Sur}$ satisfies $\tilde d_{\operatorname{Sur},p}(X,Y)\neq 0$ (and the same phenomena occurs for $\mathcal A = \operatorname{Reg}$). For instance $X=\{x,x,y\}$ and $Y=\{x,y,y\}$. This is an artifact of only allowing surjective functions instead of all possible relations in $\mathcal R(X,Y)$.

\end{enumerate}

\begin{remark}Even though the $\max$ objective in equation~\eqref{gh_inf} is convex, it is not smooth, which we observe to significantly hurt the performance of the numerical implementations. This is one reason to consider the $p$-norm approach to this relaxation and define the family of SDP relaxations \eqref{gh_p} for $1\leq p < \infty$.
\end{remark}

\begin{remark}
Note that linear constraints in the sets $\mathcal A$ are not linearly independent and the extra variable $\mathbf z$ is redundant. However, one can easily choose alternative sets of constraints (even with fewer constraints or where the extra constraint is not redundant) with the same objectives in mind:
(i) the relaxation is tight when the spaces are isometric (ii) the corresponding objective value satisfies the triangle inequality when $|X|=|Y|$. 
\end{remark}

We are primarily interested in the semidefinite programming relaxations of the Gromov-Hausdorff distance for finite metric spaces, namely $\dpp$, $1\leq p\leq \infty$ for $\mathcal A= \mathcal{GH}, \operatorname{Reg}, \operatorname{Sur}$.  In figure~\ref{diagram}, we extend a diagram of M\'emoli's to situate the SDP relaxations we study in this paper.


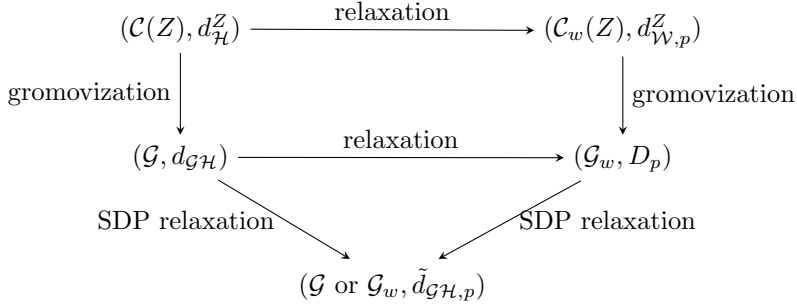
\begin{figure}[h]
\begin{center}
\begin{tikzpicture}
  \matrix (m) [matrix of math nodes,row sep=3em,column sep=1.5em,minimum width=2em]
  {
     (\mathcal C(Z), d_{\mathcal H}^Z)& & (\mathcal C_w(Z), d_{\mathcal W, p}^Z) \\
     (\mathcal G, d_{\mathcal{GH}})& & (\mathcal G_w, D_p) \\
     &(\mathcal G \text{ or }\mathcal G_w, \tilde d_{\mathcal{GH},p}) & 
     \\ };
  \path[-stealth]
    (m-1-1) edge node [left] {gromovization} (m-2-1)
            edge node [above] {relaxation} (m-1-3)
    (m-2-1) edge node [left] {SDP  relaxation} (m-3-2)
            edge node [above] {relaxation} (m-2-3)
    (m-1-3) edge node [right] {gromovization} (m-2-3)
    (m-2-3) edge node [right] { SDP  relaxation} (m-3-2)
            ;
\end{tikzpicture}
\caption{\label{diagram}  Here $\mathcal G$ is the collection of all compact metric spaces and $\mathcal G_w$ the collection of all metric measure spaces. The horizontal arrows represent the relaxation on the notion of correspondences. The gromovization arrows represent the process of getting rid of the ambient space.}
\end{center}
\end{figure}

\section{Topological properties of the relaxed distances} 

In this section, we prove the main theoretical results of the paper.
We begin by showing that the distances obtained by semidefinite
relaxation are in fact pseudometrics on suitable subsets of the set of
isometry classes of finite metric spaces; i.e., these distances
satisfy all the axioms for a metric except that there exist distinct
finite metric spaces such that the relaxed distance between them is
$0$.  We then study various properties of the induced topology,
proving analogues of the standard results about the topology induced
on the set of isometry classes of compact metric spaces by the
Gromov-Hausdorff distance.

\subsection{Pseudometrics} \label{sec:semimetric}

Let $\mathcal G_{<\infty}$ the set of all finite metric spaces.
First, we observe that $\dinf$ is a pseudometric in $\mathcal
G_{<\infty}$.  However, if the spaces $X,Y,Z$ have different numbers
of points we cannot expect the triangle inequality to hold for
$\dpp$. That is because the triangle inequality does not even hold for
tight solutions of equation~\eqref{gh_p} (i.e., rank 1 solutions,
corresponding to elements of $\mathcal R(X,Y)$). This is an
artifact of replacing the $\max$ with a sum.
 
In order to illustrate that fact we consider a simple example. Let $d_{\mathcal{GH},1}$ be the optimal of \eqref{gh_p} for $p=1$ and $\mathcal A$ the domain of \eqref{gh} (i.e. the solutions corresponding to elements of $\mathcal R(X,Y)$ ). Then consider $X=\{x,y\}$, $Y=\{x, x, y\}$, $Z=\{y\}$, and observe that triangle inequality is not satisfied since $d_{\mathcal{GH},1}(X,Y)=0$, $d_{\mathcal{GH},1}(Y,Z)=2d(x,y)$ and $d_{\mathcal{GH},1}(X,Z)=d(x,y)$.

Nonetheless, if we consider the set of metric spaces with $n$ points,
which we denote by $\mathcal G_n$, we will show that $\dpp$ for $1\leq
p <\infty$ is a pseudometric on $\mathcal G_n$.  The most interesting
part of this verification is the triangle inequality, which we prove
in Theorem~\ref{triangle} below.  In contrast to the situation with
the Gromov-Hausdorff distance, passing to isometry classes of finite
metric spaces does not suffice to produce an actual metric.  Of
course, if $X$ and $Y$ are isometric spaces then $\tilde
d_{\mathcal{A},p}(X,Y)=0$.  By construction $\tilde
d_{\mathcal{A},p}(\cdot,\cdot)\geq 0$ and the isometry between $X$ and
$Y$ induces a feasible solution for equation~\eqref{gh_p} with objective value
0. However, there exists non-isometric spaces $X,Y$ such that $\tilde
d_{\mathcal{A},p}(X,Y)=0$.  Examples of that phenomenon can be
constructed by observing that the graph isomorphism problem can be reduced to
deciding whether the Gromov-Hausdorff distance is zero.  Given a graph
$G=(V, E)$ one then constructs a metric space $X(G)$ where  
\begin{equation} \label{spaces}
d(v,v')= 
\left\{ 
\begin{matrix}1 & \text{if } (v,v') \in E \\
K\gg |V| & \text{otherwise}.
\end{matrix}
\right.
\end{equation}
Therefore, given two graphs $G,G'$ we have that $G,G'$ are isomorphic
if and only if $d_{\mathcal {GH}}(X(G), X(G') )=0$.  There exist
explicit examples in the literature of graphs where any SDP relaxation
on $|V|^2\times |V|^2$ matrices cannot distinguish between two 
non-isomorphic graphs~\cite{odonell}.  For such examples, $\tilde
d_{\mathcal {A},p}(X(G), X(G') )=0$ (see Figure~\ref{lasserre}).

\newsavebox{\smlmat}
\savebox{\smlmat}{\smallskip$ \quad\quad \left.\begin{matrix} 
x_1  \oplus   x_2  \oplus x_5 = b_1 \\
 x_1  \oplus x_2 \oplus x_5 = b_2 \\
 x_1   \oplus x_3\oplus    x_4= b_3\\
 x_2  \oplus x_3  \oplus x_4 = b_4 
 \end{matrix}\right., \text{ with } 
 \left(\begin{matrix} 
 b_1 \\
 b_2 \\
 b_3\\
 b_4
 \end{matrix} \right) =
  \left(\begin{matrix} 
 1 \\
 1 \\
 0\\
 1
 \end{matrix} \right) \text{(left), and }
 \left(\begin{matrix} 
 b_1 \\
 b_2 \\
 b_3\\
 b_4
 \end{matrix} \right) =
  \left(\begin{matrix} 
 0 \\
 0 \\
 0\\
 0
  \end{matrix} \right) \text{ (middle).}
 $}

\begin{figure}[b]
\includegraphics[width=0.32\textwidth, trim={6cm 9cm 6cm 9cm},clip]{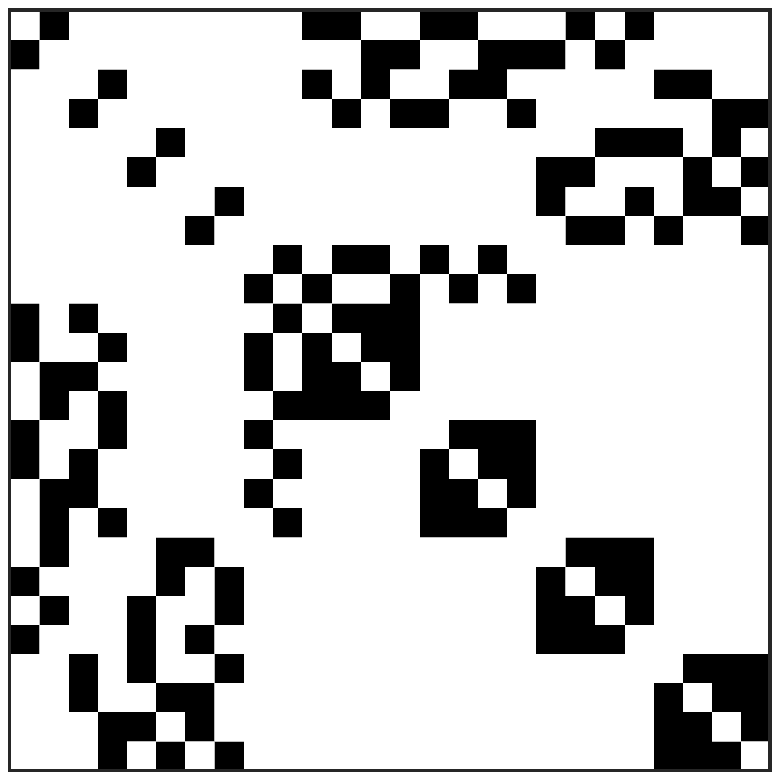}
\includegraphics[width=0.32\textwidth, trim={6cm 9cm 6cm 9cm},clip]{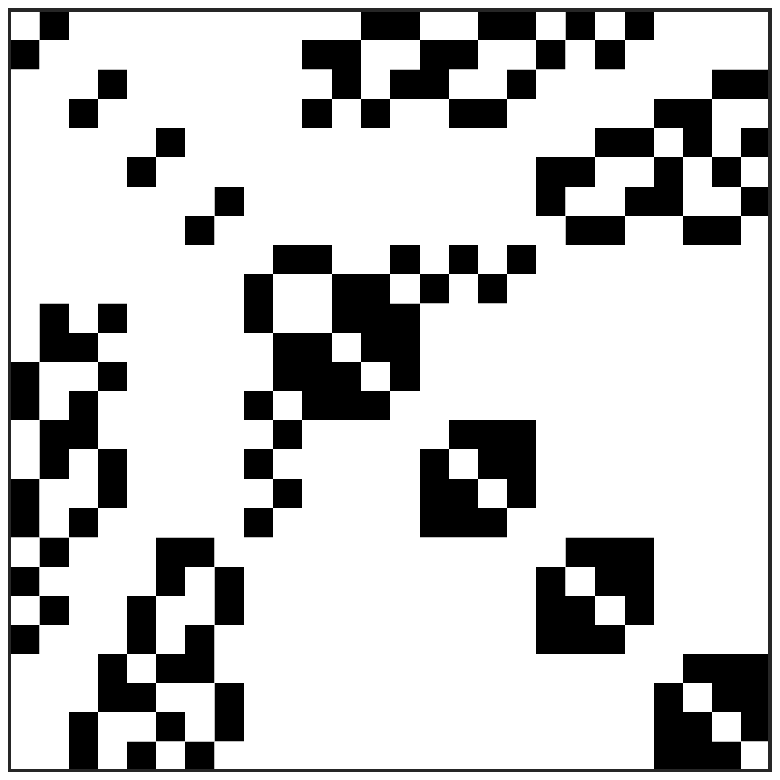}
\includegraphics[width=0.32\textwidth, trim={6cm 9cm 6cm 9cm},clip]{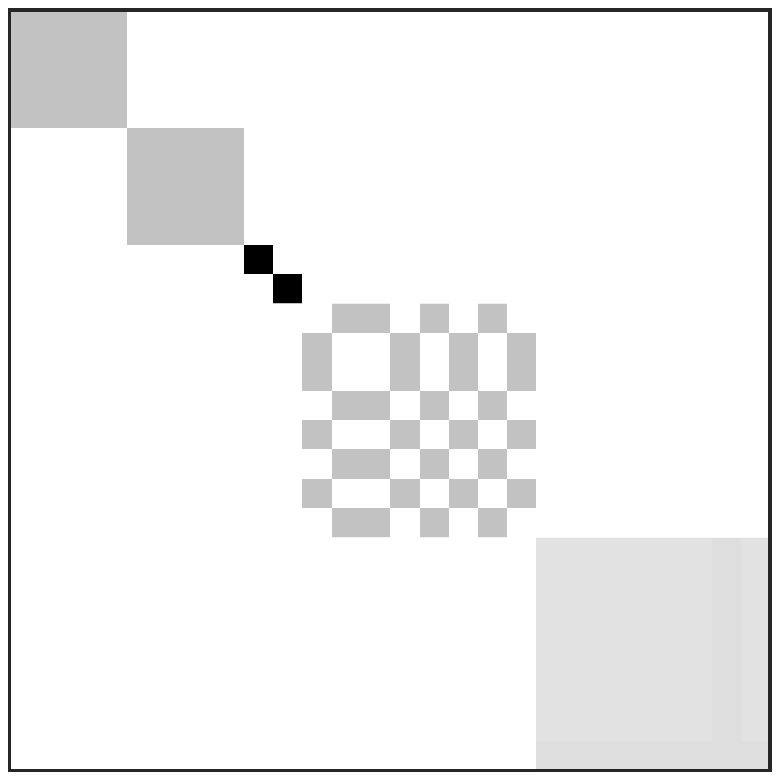}
\caption{\label{lasserre} We consider 3XOR instances with 5 variables and 4 equations and we construct the reduction from 3XOR to graph isomorphism from \cite{odonell}. The left and middle figures represent corresponding adjacency graphs obtained after the reduction from the following system of equations in $\mathbb Z_2$: 
\newline
\usebox{\smlmat} 
\newline
 Each graph has 26 vertices. We construct finite metric spaces $X$ and $Y$ according to \eqref{spaces} and we use SDPNAL+ \cite{yang2015sdpnal+} to compute the  the relaxed distance, obtaining $\tilde d_{\operatorname{Reg},1}(X,Y)=0$. The minimizer $\mathbf{Z}$ of \eqref{gh_p} is rank 16. The figure in the right shows a \emph{soft assignment} between $X$ and $Y$ obtained from $\mathbf Z$ by computing $\hat{\mathbf{Z}}\mathbf{1}$ and rearranging accordingly.}
\end{figure}



\begin{theorem} \label{triangle}
Consider $\dpp$ and $\dinf$ defined in equations~\eqref{gh_p} and \eqref{gh_inf} respectively for $\mathcal A= \mathcal{GH},\operatorname{Reg},\operatorname{Sur}$. Then we have:
\begin{enumerate}
\item For $X,Y,W \in \mathcal G_n $, and $1\leq p< \infty$, $\dpp$  satisfies the triangle inequality. \label{triangle_1}
\item For $X,Y,W \in \mathcal G_{<\infty}$, $\dinf$ satisfies the triangle inequality. \label{triangle_2}
\end{enumerate}
\end{theorem}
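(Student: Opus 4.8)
The plan is to prove both parts by a single \emph{gluing} (composition-of-correspondences) construction: given near-optimal feasible matrices $\mathbf Z^{XY}$ and $\mathbf Z^{YW}$ for the pairs $(X,Y)$ and $(Y,W)$, I will build a feasible $\mathbf Z^{XW}$ by contracting over the common space $Y$, and then bound its objective by the sum of the two objectives. To make the positive semidefiniteness transparent I would work with a Gram representation. Since $\mathbf Z^{XY}\succeq 0$ has unit corner, there are vectors $g_{ij},g_0$ with $\hat{\mathbf Z}^{XY}_{ij,i'j'}=\langle g_{ij},g_{i'j'}\rangle$, $\mathbf z^{XY}_{ij}=\langle g_{ij},g_0\rangle$ and $\|g_0\|=1$, and similarly vectors $f_{jk},f_0$ for $\mathbf Z^{YW}$. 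I then set $h_{ik}=\sum_j g_{ij}\otimes f_{jk}$ and $h_0=g_0\otimes f_0$, and let $\mathbf Z^{XW}$ be the Gram matrix of $\{h_{ik}\}\cup\{h_0\}$. By construction $\mathbf Z^{XW}\succeq 0$ with unit corner, and explicitly $\hat{\mathbf Z}^{XW}_{ik,i'k'}=\sum_{j,j'}\hat{\mathbf Z}^{XY}_{ij,i'j'}\hat{\mathbf Z}^{YW}_{jk,j'k'}$ and $\mathbf z^{XW}_{ik}=\sum_j \mathbf z^{XY}_{ij}\mathbf z^{YW}_{jk}$; in the rank-one case this is exactly the classical composition of correspondences, which is what motivates the definition.

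Next I would verify that $\mathbf Z^{XW}$ lies in $\mathcal A$. The marginal constraints transfer by summing the product and using the factors' marginals: nonnegativity together with $\ge 1$ marginals yields $\ge 1$ (the case $\mathcal A=\mathcal{GH}$, and the inequality side of $\operatorname{Sur}$), while equality marginals yield equalities (the case $\mathcal A=\operatorname{Reg}$). The structural vanishing constraints $\hat{\mathbf Z}_{ij,ij'}=0$ transfer because the corresponding factor already vanishes; the box bound $\hat{\mathbf Z}^{XW}_{ik,i'k'}\le 1$ follows from $\sum_{j,j'}\hat{\mathbf Z}^{XY}_{ij,i'j'}=1$ together with $\hat{\mathbf Z}^{YW}\le 1$; and the trace constraint (for $\operatorname{Reg}$ and $\operatorname{Sur}$) follows from the per-$j$ identities $\sum_i \hat{\mathbf Z}^{XY}_{ij,ij}=1$, $\sum_k \hat{\mathbf Z}^{YW}_{jk,jk}=1$, which are themselves consequences of the tight marginals and the vanishing constraints. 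The constraint $\hat{\mathbf Z}\mathbf 1=n\mathbf z$ need not be checked separately: as noted in the remark it is redundant, and in fact the all-ones Gram matrix of $\{\sum_i g_{ij}\}_j$ forces $\sum_{ij}g_{ij}=n g_0$, so once the essential constraints hold for $\mathbf Z^{XW}$ this one holds automatically.

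For the objective bound I would write the $(X,W)$ objective as a sum of $|d_X(x_i,x_{i'})-d_W(w_k,w_{k'})|^p$ against the nonnegative weights $\omega_{ii'jj'kk'}=\hat{\mathbf Z}^{XY}_{ij,i'j'}\hat{\mathbf Z}^{YW}_{jk,j'k'}$, apply the pointwise triangle inequality $|d_X-d_W|\le |d_X-d_Y|+|d_Y-d_W|$, and then use Minkowski's inequality in $\ell^p(\omega)$. The two resulting terms collapse to $\frac1{n^2}\operatorname{Tr}(\Gamma^{(p)}\hat{\mathbf Z}^{XY})$ and $\frac1{n^2}\operatorname{Tr}(\Gamma^{(p)}\hat{\mathbf Z}^{YW})$ precisely when the contracted $Y$-marginals $\sum_{k,k'}\hat{\mathbf Z}^{YW}_{jk,j'k'}$ and $\sum_{i,i'}\hat{\mathbf Z}^{XY}_{ij,i'j'}$ equal one. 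Combined with optimality of $\mathbf Z^{XY},\mathbf Z^{YW}$ and the $\tfrac12(\cdot)^{1/p}$ normalization, this gives $\dpp(X,W)\le \dpp(X,Y)+\dpp(Y,W)$, completing part~\eqref{triangle_1} for $\mathcal A=\operatorname{Reg}$, where both marginals are equalities.

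The $\infty$ case of part~\eqref{triangle_2} is cleaner and works for all three $\mathcal A$ on $\mathcal G_{<\infty}$: the support of $\hat{\mathbf Z}^{XW}$ is contained in the composition of the two supports, so every active pair $(ik,i'k')$ factors through active pairs $(ij,i'j')$ and $(jk,j'k')$, whence $\Gamma^{XW}_{ik,i'k'}\le \Gamma^{XY}_{ij,i'j'}+\Gamma^{YW}_{jk,j'k'}$ is bounded by the two maxima; feasibility of the $\ge 1$ constraints is automatic as above, and no marginal collapse is needed. I expect the \emph{main obstacle} to be exactly this collapse step in the finite-$p$ case for $\mathcal A=\mathcal{GH}$ and $\mathcal A=\operatorname{Sur}$, where the relevant $Y$-marginal is constrained only by $\ge 1$ rather than $=1$, so the composed weights may overcount and push the bound in the wrong direction. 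Resolving this requires showing the minimizer can be chosen with the pertinent marginal active at optimum (so that the $\ge 1$ becomes a working equality), after which the collapse goes through verbatim; for $\operatorname{Reg}$, where both marginals are equalities, this difficulty does not arise.
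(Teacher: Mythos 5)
Your construction is the same as the paper's: compose the two optimal matrices by contracting over $Y$ via $\hat{\mathbf T}_{ik,i'k'}=\sum_{j,j'}\hat{\mathbf Z}_{ij,i'j'}\hat{\mathbf V}_{jk,j'k'}$, certify positive semidefiniteness by exhibiting the tensor-product Gram vectors $\sum_j z_{ij}\otimes v_{jk}$, collapse the extra $Y$-sums using the marginal constraints, and finish with the pointwise triangle inequality for $\Gamma$; part (2) is handled identically in both proofs, by factoring each active pair of the composed support through active pairs of the two factors. Two differences are worth recording. First, for the objective bound at finite $p$ the paper reduces to subadditivity of $\dpp^p$ and then writes its key display without the $p$-th powers on $\Gamma$, so its final appeal to ``the triangle inequality in $\mathbb R$'' is literally complete only for $p=1$; your Minkowski inequality in $\ell^p(\omega)$ with weights $\omega_{ii'jj'kk'}=\hat{\mathbf Z}_{ij,i'j'}\hat{\mathbf V}_{jk,j'k'}$ is the right way to handle $p>1$, and it yields the triangle inequality for $\dpp$ directly, making the initial reduction to $p$-th powers unnecessary. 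Second, the ``main obstacle'' you identify --- that for $\mathcal A=\mathcal{GH}$ and $\mathcal A=\operatorname{Sur}$ the contracted $Y$-marginals are only constrained to be $\geq 1$, so the collapse could overcount --- is exactly the point the paper disposes of in one line by asserting that when $|X|=|Y|=|W|$ these inequality constraints ``are tight, meaning that equality holds.'' You leave this step open; the paper asserts it without an argument (for $\operatorname{Reg}$ it is built into the constraint set, so both treatments are complete there). In short, your proposal matches the paper's proof, is more careful at the step where the paper is loosest, and is incomplete at precisely the step the paper does not justify.
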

\begin{proof}
We begin by proving part (\ref{triangle_1}).  Note that it suffices to
show that for $p\geq 1$, 
\begin{equation} \label{triangle_p}
\dpp(X,W)^p\leq \dpp(X,Y)^p+ \dpp(Y,W)^p.
\end{equation}
This follows from the fact that for $a,b>0$ and $p\geq 1$ we have
$a^p+b^p\leq (a+b)^p$ and therefore if equation~\eqref{triangle_p} holds we
have: 
$$\dpp(X,W)\leq \sqrt[p]{\dpp(X,Y)^p+ \dpp(Y,W)^p}\leq \dpp(X,Y)+ \dpp(Y,W).$$

Now let $\mathbf Z$ and $\mathbf V$ the minimizers in equation~\eqref{gh_p} for $X,Y$ and $Y,W$ respectively in $\mathcal A$. From $\mathbf Z$ and $\mathbf V$ we construct $\mathbf T$ feasible for $X,W$ in equation~\eqref{gh_p} and we show the objective function in $\mathbf T$ is smaller or equal to $ \dpp(X,Y)+ \dpp(Y,W)$.

If $x_i, x_{i'} \in X$, $y_j, y_{j'} \in Y$ and $w_k, w_{k'} \in W$ let $\mathbf T$ the unique feasible matrix in $\mathcal A$ that satisfies
\begin{equation} \hat{\mathbf T}_{ik,i'k'}\colon=\sum_{j,j'}\hat{\mathbf Z}_{ij,i'j'}\hat{\mathbf V}_{jk,j'k'}. \label{T}
\end{equation}

To see that $\mathbf T$ is well-defined, observe that it is
straightforward to check that $\mathbf T$ satisfies the linear and
inequality constraints of $\mathcal A$ using the fact that $\mathbf Z$
and $\mathbf V$ belong to $\mathcal A$. In order to verify that
$\mathbf T$ is positive semidefinite, consider the Cholesky
decompositions of $\mathbf Z$ and $\mathbf V$.  Then
$$\hat {\mathbf Z}_{ij,i'j'}= z_{ij}^\top  z_{i'j'},\quad \hat{ \mathbf V}_{jk,j'k'}=  v_{jk}^\top  v_{j'k'}$$ 
where $z$ and $v$ do not necessarily correspond to the last column in equation~\eqref{Z}. In fact $z$ is a $r\times n^2$ matrix where $r$ is the rank of $\hat {\mathbf Z}$ and $z_{ij}$ is the column of $z$ indexed by $i=1,\ldots, n$ and $j=1,\ldots, n$. Then note
$$\hat {\mathbf T}_{ik,i'k'}=\sum_{j,j'}\hat{\mathbf Z}_{ij,i'j'}\hat{\mathbf V}_{jk,j'k'} = \left\langle \sum_{j}z_{ij} \otimes v_{jk}, \sum_{j'} z_{i'j'}\otimes v_{j'k'} \right\rangle $$
therefore $\hat{\mathbf T}$ is PSD since it is a Gram matrix, and $\mathbf T$ is PSD since it has the same rank as $\hat{\mathbf T}$.

For the triangle inequality we need to show
\begin{multline} \label{triangle}
\sum_{i,i'}\sum_{k,k'}\mathbf T_{ik,i'k'} |d_X(x_i,x_{i'})- d_W(w_k, w_{k'})|\leq \sum_{i,i'}\sum_{j,j'} \mathbf Z_{ij,i'j'} |d_X(x_i,x_{i'})- d_Y(y_j, y_{j'})|  \\ +\sum_{j,j'}\sum_{k,k'} \mathbf V_{jk,j'k'} |d_X(x_j,x_{j'})- d_W(w_k, w_{k'})|.
\end{multline}
In the case we are dealing with, where $|X|=|Y|=|W|$, the constraints 
\[
\sum_{i,i'} \mathbf Z_{ij,i'j'}\geq1 \qquad\textrm{and}\qquad
\sum_{k,k'}\mathbf V_{jk,j'k'}\geq1
\]
are tight, meaning that equality holds, so for all $j,j'$, we can
multiply by 1 and rewrite the RHS of equation~\eqref{triangle} as 
\begin{eqnarray*}
\sum_{i,i'}\sum_{j,j'} \mathbf Z_{ij,i'j'} |d_X(x_i,x_{i'})- d_Y(y_j, y_{j'})| \sum_{k,k'}\mathbf V_{jk,j'k'} + \sum_{j,j'}\sum_{k,k'} \mathbf V_{jk,j'k'} |d_X(x_j,x_{j'})- d_W(w_k, w_{k'})| \sum_{i,i'} \mathbf Z_{ij,i'j'}\\
= 
\sum_{i,i'} \sum_{k,k'} \sum_{j,j'} \mathbf Z_{ij,i'j'} \mathbf V_{jk,j'k'} ( |d_X(x_i,x_{i'})- d_Y(y_j, y_{j'})| +  |d_Y(y_j,y_{j'})- d_W(w_k, w_{k'})|) 
\end{eqnarray*}
Now it is clear that equation~\eqref{triangle} follows from the
triangle inequality in $\mathbb R$, which completes the verification
of part~\eqref{triangle_1}.

In order to prove part~\eqref{triangle_2}, now we let $X,Y,Z$ to be finite metric spaces with arbitrary number of points. And we let $\mathbf Z$ and $\mathbf V$ the minimizers in equations~\eqref{gh_inf} for $X,Y$ and $Y,W$ respectively as before. We define $\mathbf T$ as in equation~\eqref{T}. We know $\mathbf T$ is feasible for equation~\eqref{gh_inf} so the remaining step to prove is 
\begin{equation} \max_{\mathbf T\neq 0} \Gamma_{ik,i'k'}\leq \max_{\mathbf Z\neq 0} \Gamma_{ij,i'j'} + \max_{\mathbf V\neq 0} \Gamma_{jk,j'k'}
\label{inf_ineq}
\end{equation} 
Let $(ik,i'k')$ the $\arg\max$ of the left hand side of equation~\eqref{inf_ineq}. Since ${\mathbf T}_{ik,i'k'}\neq 0$ and ${\mathbf T}_{ik,i'k'}=\sum_{j,j'}{\mathbf Z}_{ij,i'j'} {\mathbf T}_{kj,k'j'}$ then there exists $j,j'$ such that ${\mathbf Z}_{ij,i'j'}\neq 0$ and ${\mathbf T}_{kj,k'j'}\neq 0$.
Then we have 
\begin{multline} 
\dinf(X,W) \leq \max_{\mathbf T\neq 0} \Gamma_{ik,i'k'} = |d_X(x_i, x_{i'})- d_{W}(w_k, w_{k'})| \\
\leq |d_X(x_i, x_{i'})- d_Y(y_j, y_{j'})| + |d_Y(y_j, y_{j'})- d_W(w_k, w_{k'})| \leq \max_{\mathbf Z\neq 0} \Gamma_{ij,i'j'} + \max_{\mathbf V\neq 0} \Gamma_{jk,j'k'} \\= \dinf(X,Y)+\dinf(Y,W).
\end{multline}
\end{proof}

\begin{remark} The same argument will show that $\tilde d_{\mathcal{GW},p}$ satisfies triangle inequality as long as we add the constraint $\hat{\mathbf Z}\mathbf 1 = n\mathbf z$, where $n=|X|=|Y|=|W|$ and the measure of each of the points is equal $1/n$.  
\end{remark}

\subsection{Monotonicity and continuity properties}

The following lemma shows the monotonicity of $\dpp$ with respect to $p$. 
The second part of the lemma proves continuity of $\dpp$ at infinity. 
 
\begin{proposition} \label{bound}
For any $X$,$Y$ finite metric spaces we have:
\begin{enumerate}
\item If  $1\leq p\leq q< \infty $ then $\dpp(X,Y)\leq \dqq(X,Y) \leq \dinf(X,Y)$. \label{1}
\item $\lim_{ p\to \infty }\dpp(X,Y) = \displaystyle\min_{\mathbf Z \in \mathcal{ \tilde T}} \max_{i,j,i',j'\colon \mathbf Z \neq 0} \Gamma_{ij,i'j'} =\dinf(X,Y)$. \label{2}
\end{enumerate}
\end{proposition}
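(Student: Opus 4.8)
The plan is to read both relaxed distances as minimizations, over the common feasible set $\mathcal A$, of $L^p$- and $L^\infty$-norms of the cost array $\Gamma$ against a probability measure built from $\hat{\mathbf Z}$, and then to reduce the whole statement to the elementary monotonicity and limiting behaviour of $L^p$-norms. First I would expand $\operatorname{Trace}(\Gamma^{(p)}\hat{\mathbf Z})=\sum_{ij,i'j'}\Gamma_{ij,i'j'}^{\,p}\,\hat{\mathbf Z}_{ij,i'j'}$ and observe that for $\mathcal A=\operatorname{Reg},\operatorname{Sur}$ the equality constraints force $\sum_{ij,i'j'}\hat{\mathbf Z}_{ij,i'j'}=n^2$ for every feasible $\mathbf Z$. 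Hence $\mu_{\mathbf Z}:=\hat{\mathbf Z}/n^2$ is a probability vector on index pairs and
\[
2\,\dpp(X,Y)=\min_{\mathbf Z\in\mathcal A}\|\Gamma\|_{L^p(\mu_{\mathbf Z})},\qquad 2\,\dinf(X,Y)=\min_{\mathbf Z\in\mathcal A}\|\Gamma\|_{L^\infty(\mu_{\mathbf Z})},
\]
where $\|\Gamma\|_{L^\infty(\mu_{\mathbf Z})}=\max_{\hat{\mathbf Z}_{ij,i'j'}\neq0}\Gamma_{ij,i'j'}$, matching \eqref{gh_inf}.

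With this reformulation, part~\eqref{1} follows from the power-mean (Jensen) inequality: for a fixed probability measure $\mu$ and $1\le p\le q<\infty$ one has $\|\Gamma\|_{L^p(\mu)}\le\|\Gamma\|_{L^q(\mu)}\le\|\Gamma\|_{L^\infty(\mu)}$. To obtain $\dpp\le\dqq$ I would evaluate the $p$-problem at the minimizer $\mathbf Z_q$ of the $q$-problem (feasible, since it lies in $\mathcal A$), giving $2\,\dpp\le\|\Gamma\|_{L^p(\mu_{\mathbf Z_q})}\le\|\Gamma\|_{L^q(\mu_{\mathbf Z_q})}=2\,\dqq$; feeding the $\infty$-minimizer into the $q$-problem and using $\|\cdot\|_{L^q(\mu)}\le\|\cdot\|_{L^\infty(\mu)}$ yields $\dqq\le\dinf$.

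For part~\eqref{2}, monotonicity together with $\dpp\le\dinf$ shows that $L:=\lim_{p\to\infty}\dpp(X,Y)$ exists and $L\le\dinf(X,Y)$; the reverse inequality is the crux. Here I would use that $\mathcal A$ is compact (closed, and bounded by $0\le\mathbf Z\le1$), so the minimizers $\mathbf Z_p$ admit a subsequence $\mathbf Z_{p_k}\to\mathbf Z^\ast\in\mathcal A$. Picking an index pair $(a,b)$ with $\hat{\mathbf Z}^\ast_{ab}=c>0$ that attains $\max_{\hat{\mathbf Z}^\ast\neq0}\Gamma$, convergence gives $\hat{\mathbf Z}_{p_k,ab}\ge c/2$ for large $k$, so that
\[
2\,\tilde d_{\mathcal A,p_k}(X,Y)=\|\Gamma\|_{L^{p_k}(\mu_{\mathbf Z_{p_k}})}\ \ge\ \Big(\tfrac{c}{2n^2}\Big)^{1/p_k}\,\Gamma_{ab}\ \xrightarrow[k\to\infty]{}\ \Gamma_{ab}.
\]
Thus $2L\ge\max_{\hat{\mathbf Z}^\ast\neq0}\Gamma\ge2\,\dinf$, the last inequality because $\mathbf Z^\ast$ is feasible for the $\infty$-problem; combined with $L\le\dinf$ this gives $L=\dinf$. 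Moreover $\mathbf Z^\ast$ is then itself a minimizer of the $\infty$-problem, which realizes the middle expression, the minimization of $\max_{\hat{\mathbf Z}\neq0}\Gamma$ over the set $\mathcal{\tilde T}$ of accumulation points of $\{\mathbf Z_p\}$.

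The step I expect to be the main obstacle is this reverse inequality in part~\eqref{2}: since the optimal $\mathbf Z_p$ varies with $p$, one cannot simply pass to the limit inside a single $L^p$-norm, and it is the compactness-plus-subsequence argument above, using $(\mathrm{const})^{1/p}\to1$, that makes it work. A secondary bookkeeping point is the normalization: the clean identity $\sum\hat{\mathbf Z}=n^2$ holds for $\operatorname{Reg}$ and $\operatorname{Sur}$ by their equality constraints, whereas for $\mathcal A=\mathcal{GH}$ the constraints are inequalities, so I would instead normalize by the realized mass $S_{\mathbf Z}:=\sum\hat{\mathbf Z}_{ij,i'j'}$ and carry the factor $(S_{\mathbf Z}/n^2)^{1/p}$; this factor tends to $1$ as $p\to\infty$, so part~\eqref{2} is unaffected, and I would verify directly that it does not reverse the comparisons underlying part~\eqref{1}.
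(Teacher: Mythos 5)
Your proof is correct, and for part~(1) it is essentially the paper's own argument: normalize $\hat{\mathbf Z}$ to a probability vector and apply the weighted power-mean inequality pointwise in $\mathbf Z$ before minimizing. The genuine difference is in part~(2). The paper computes, for each \emph{fixed} $\mathbf Z$, that $\left(\frac{1}{n^2}\sum_{ij,i'j'}\Gamma_{ij,i'j'}^p\mathbf Z_{ij,i'j'}\right)^{1/p}\to\Gamma^*_{\mathbf Z}:=\max\{\Gamma_{ij,i'j'}\colon \mathbf Z_{ij,i'j'}\neq 0\}$ and concludes by ``taking infimum in $\mathbf Z$''; that pointwise limit, together with the bound by $\Gamma^*_{\mathbf Z}$, directly yields only $\lim_{p\to\infty}\dpp(X,Y)\leq \dinf(X,Y)$, and the interchange of $\lim_{p\to\infty}$ with $\min_{\mathbf Z}$ needed for the reverse inequality is left implicit. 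Your compactness argument --- extract a convergent subsequence of minimizers $\mathbf Z_{p_k}\to \mathbf Z^*\in\mathcal A$, retain a single entry of the limiting support, and use $\left(c/2n^2\right)^{1/p_k}\to 1$ --- supplies exactly the missing lower bound $\lim_{p\to\infty}\dpp(X,Y)\geq \tfrac12\max_{\mathbf Z^*\neq 0}\Gamma\geq \dinf(X,Y)$, so on this point your write-up is more complete than the paper's. (Your reading of the undefined set $\mathcal{\tilde T}$ as the set of accumulation points of the minimizers is a guess; in context it is almost certainly a typo for $\mathcal A$, which your argument covers.) One caution on your normalization remark: you are right that $\mathbf 1^\top\hat{\mathbf Z}\mathbf 1=n^2$ is forced by the equality constraints of $\operatorname{Reg}$ and $\operatorname{Sur}$ but is only bounded below for $\mathcal{GH}$; however, carrying the factor $(S_{\mathbf Z}/n^2)^{1/p}$ does not by itself rescue part~(1) for $\mathcal{GH}$, since that factor is $\geq 1$ and decreasing in $p$ and therefore works against the desired monotonicity (take $\Gamma$ constant on the support of a feasible $\mathbf Z$ with excess mass to see the two effects compete). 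The paper shares this gap, as it simply asserts that the optimal mass equals $n^2$; a complete treatment of the $\mathcal{GH}$ case requires a separate argument that the minimizer may be taken with minimal total mass.
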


\begin{proof}
Let $\mathbf Z \in  {\mathcal A}$ optimal for equations~\eqref{gh_inf} or \eqref{gh_p}  for some value of $p$. Then $ \mathbf{ 1^\top \hat{ Z} 1}= n^2$. The weighted power mean inequality implies 
$$\left( \frac{1}{n^2} \sum_{ij,i'j'} \Gamma_{ij,i'j'}^p \mathbf Z_{ij,i'j'} \right)^{1/p} \leq \left( \frac{1}{n^2} \sum_{ij,i'j'} \Gamma_{ij,i'j'}^q \mathbf Z_{ij,i'j'} \right)^{1/q} \leq  \max_{i,j,i',j'\colon \mathbf Z_{ij,i'j'}\neq 0} \Gamma_{ij,i'j'}$$
and taking the infimum in $\mathbf Z$ we obtain \eqref{1}. 

Now for fixed $\mathbf Z$ let $\Gamma_{\mathbf Z}^*=\max \{ \Gamma_{ij,i'j'}\colon \mathbf Z_{ij,i'j'}\neq 0\}$, then using the standard calculus argument 
$$\lim_{p\to \infty} \left( \frac{1}{n^2} \sum_{ij,i'j'} \Gamma_{ij,i'j'}^p \mathbf Z_{ij,i'j'} \right)^{1/p} = \Gamma^*_{\mathbf Z} \lim_{p\to \infty } \left( \sum_{ij,i'j'} \left(\frac{\Gamma_{ij,i'j'}}{\Gamma^*_{\mathbf Z}}\right)^p \frac{\mathbf Z_{ij,i'j'}}{n^2} \right)^{1/p} =\Gamma^*_{\mathbf Z}.$$
and taking infimum in $\mathbf Z$ we obtain \eqref{2}. 
\end{proof}

Proposition \ref{bound} holds for metric spaces $X$ and $Y$ with possibly different number of points and it says that even though $\dpp$ may not satisfy the triangle inequality, it does in the limit $p\to \infty$.


\subsection{Extension of the distance to compact infinite sets} \label{extension}

Every compact metric space $X$ is the limit of a sequence of finite
metric spaces in the Gromov-Hausdorff topology, denoted here by
$\tau_{\mathcal{GH}}$ (see for instance~\cite[Example 7.4.9]{bbi}). In
fact, by taking $\epsilon_n\to 0$ and choosing a finite
$\epsilon_n$-net $S_n$ in $X$ for every $n$, we get
$S_n\stackrel{\mathcal{GH}}{\to} X$ because 
$$ \d(X,Sn)\leq d_{\mathcal{H}}(X,S_n)\leq \epsilon_n.$$

This property inspires the following definition of an actual distance between compact metric spaces.

\begin{definition} \label{d_hat}
Let $X,Y$ compact metric spaces. Given $\epsilon_n\to 0$, let $X_n, Y_n$ respective $\epsilon_n$-nets of $X$ and $Y$, with the same number of points $N$. Define 
\begin{equation} \label{infinite_d}
\hat d_{\mathcal A,p}(X,Y)=\inf_{\epsilon_n,\;X_n\,Y_n}\lim \sup_{n\to \infty} \dpp(X_n,Y_n) 
\end{equation}
\end{definition}

Note that  $\lim\sup$ exists because $\dpp(X_n,Y_n)\leq \dinf(X_n,Y_n)\leq \frac12\max(\operatorname{diam}(X), \operatorname{diam}(Y))$ for all $n$. Also, note the triangle inequality holds for this limit, which also implies that $\hat d_{\mathcal A,p}$ and $\dpp$ may not agree.


To illustrate how the right hand side of \eqref{infinite_d} behaves, let's say that $|X|<|Y|$ and for some $n$ the $\epsilon_n$-net $Y_n$ of $Y$ has at least $N$ points and $|X|<N$, then consider $X_n$ to be $X$ with some repeated points and run the SDP \eqref{gh_p} or \eqref{gh_inf} to compute $\dpp(X_n,Y_n)$ so that $|Y_n|=|X_n|=N$. Note that this is well defined and when $\dpp(X_n,X)$ exists (i.e. $\mathcal A=\mathcal{GH},\operatorname{Sur}$) we have $\dpp(X_n,X)=0$ because the matrix $\mathbf Z$ corresponding to the surjective function $X_n\to X$ is in $\mathcal A$ and has objective value 0.


\subsection{Comparison with the Gromov-Hausdorff distance}
Let $\mathcal X$ denote the set of isometry classes of compact metric spaces.
Definition~\ref{d_hat} extends the relaxed distances \eqref{gh_p} and
\eqref{gh_inf} to $\mathcal X$, obtaining the function $\hat
d_{\mathcal{A},p}\colon \mathcal X \times \mathcal X \to \mathbb R$.

\begin{lemma} For $X,Y \in \mathcal X$ we have for $1\leq p\leq \infty$
$$\hat d_{A,p}(X,Y)\leq \d(X,Y)$$
\end{lemma}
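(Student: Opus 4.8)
The plan is to estimate the $\limsup$ in Definition~\ref{d_hat} along a single convenient sequence of nets; because $\hat d_{\mathcal A,p}(X,Y)$ is an infimum over all admissible sequences, producing one sequence whose $\limsup$ is at most $\d(X,Y)$ already suffices. First I would fix $\epsilon_n\to 0$ and pick $\epsilon_n$-nets $X_n\subset X$ and $Y_n\subset Y$; after padding the smaller net with repeated points --- which changes neither the metric nor the Gromov--Hausdorff distance to the ambient space --- I may assume $|X_n|=|Y_n|=N$, so that $\dpp(X_n,Y_n)$ is well defined.

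The first key step is the finite relaxation inequality $\dpp(X_n,Y_n)\le \d(X_n,Y_n)$. For $\mathcal A=\mathcal{GH}$ this is immediate from the construction of the SDP: as noted for relaxation \eqref{gh_inf}, every correspondence $R\in\mathcal R(X_n,Y_n)$ induces, up to normalization, a feasible $\mathbf Z$ with the corresponding objective value, so the relaxed minimum $\dinf(X_n,Y_n)$ is at most the minimax value $\d(X_n,Y_n)$. Combining this with the monotonicity $\dpp\le\dinf$ of Proposition~\ref{bound} gives the inequality simultaneously for all $1\le p\le\infty$.

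The second step is the Gromov--Hausdorff estimate $\limsup_{n}\d(X_n,Y_n)\le \d(X,Y)$. Since $X_n$ is an $\epsilon_n$-net, $\d(X_n,X)\le d_{\mathcal H}(X_n,X)\le\epsilon_n$, and likewise $\d(Y_n,Y)\le\epsilon_n$; the triangle inequality for $\d$ then yields $\d(X_n,Y_n)\le \d(X,Y)+2\epsilon_n$, and letting $n\to\infty$ gives the bound. Chaining the two steps, $\limsup_n\dpp(X_n,Y_n)\le\limsup_n\d(X_n,Y_n)\le\d(X,Y)$, and passing to the infimum in \eqref{infinite_d} proves the lemma.

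The step I expect to be the main obstacle is the finite relaxation inequality for the tighter feasible sets $\mathcal A=\operatorname{Reg}$ and $\mathcal A=\operatorname{Sur}$. There the SDP relaxes a minimization over only bijective, respectively surjective, correspondences rather than over all of $\mathcal R(X_n,Y_n)$, so $\dpp$ need not fall below $\d$ for a fixed finite pair; indeed $X=\{x,x,y\}$, $Y=\{x,y,y\}$ already have $\d=0$ with positive relaxed distance. Hence the argument above applies verbatim only to $\mathcal{GH}$, and for the restricted relaxations one must exploit the freedom in the infimum over $\epsilon_n$-nets to select configurations on which bijective (surjective) correspondences still recover the minimax value, so that padding with repeated points does not destroy the relaxation bound.
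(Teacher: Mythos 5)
Your argument is complete for $\mathcal A=\mathcal{GH}$, and there it essentially parallels the paper: bound the $\limsup$ along one sequence of nets, use that every correspondence induces a feasible $\mathbf Z$, and finish with $\d(X_n,Y_n)\le\d(X,Y)+2\epsilon_n$. But for $\mathcal A=\operatorname{Reg}$ and $\operatorname{Sur}$ you have correctly diagnosed the obstacle and then stopped short of removing it, so as written there is a genuine gap: the finite relaxation inequality $\dpp(X_n,Y_n)\le\d(X_n,Y_n)$ is simply false for a fixed pair of padded nets (your own example $\{x,x,y\}$ versus $\{x,y,y\}$), and saying that one ``must exploit the freedom in the infimum over nets'' is a statement of what needs to be proved, not a proof.

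The missing construction is short, and it is exactly what the paper does: do not pad the nets arbitrarily to a common cardinality, but pad them \emph{along the optimal correspondence}. For finite $X,Y$ let $R\in\mathcal R(X,Y)$ realize the minimax value in \eqref{gh_cont} and set $N=|R|$; build $X_N$ by repeating each $x\in X$ as many times as it occurs as a first coordinate of a pair in $R$, and build $Y_N$ symmetrically. These are still $\epsilon$-nets for every $\epsilon$, they have the same cardinality $N$, and $R$ now lifts to a \emph{bijection} $X_N\to Y_N$ with the same distortion. The induced rank-one $\mathbf Z$ is feasible for $\operatorname{Reg}$ (hence for $\operatorname{Sur}$ and $\mathcal{GH}$), with objective value $\d(X,Y)$ for $p=\infty$ and, by Proposition~\ref{bound}, at most that for $p<\infty$. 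Since $\hat d_{\mathcal A,p}$ is an infimum over admissible sequences of nets, this single choice gives the bound in the finite case; for general compact $X,Y$ apply this to $\epsilon_n$-nets $S_n,T_n$ and combine with your estimate $\d(S_n,T_n)\le\d(X,Y)+2\epsilon_n$. With that insertion your proof closes for all three feasible sets.
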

\begin{proof}
First assume $X,Y$ are finite and let $R \in \mathcal R (X,Y)$ the minimizer in \eqref{gh_cont}. If $|R|=N$ let $X_N, Y_N$ the $\epsilon$-net so that every element of $X$ appears in $X_N$ as many times as it appears in $R$ (and the same for $Y_N$). Then the bijective function between $X_N$ and $Y_N$ corresponding to $R$ induces a feasible $\mathbf Z$, proving the result in the finite case. For the remaining case consider a $\epsilon_n$-net where $\epsilon_n\to 0$.  
\end{proof}

Now consider $X,Y$ finite metric spaces. First observe that $\mathcal A=\mathcal{GH}$ includes the $\mathbf Z$ induced by all elements in $\mathcal R(X,Y)$, which  together with Proposition \ref{bound} implies
$$\tilde d_{\mathcal{GH},p}(X,Y) \leq \tilde d_{\mathcal{GH},\infty}(X,Y)\leq \d(X,Y).$$ 
Since $\operatorname{Sur} \subset \mathcal GH$ we have 
$$ \tilde d_{\mathcal{GH},p}(X,Y) \leq \tilde d_{\operatorname{Sur},p}(X,Y),$$
and if $|X|=|Y|$ we can consider that $\operatorname{Reg} \subset \operatorname{Sur}$ therefore
$$ \tilde d_{\mathcal{GH},p}(X,Y) \leq \tilde d_{\operatorname{Sur},p}(X,Y) \leq \tilde d_{\operatorname{Reg},p}(X,Y).$$
Also, the smaller the set $\mathcal A$, the more likely is the relaxation to produce a tight solution (a rank 1 solution, corresponding with an element of $\mathcal R(X,Y)$).
Note that neither $\tilde d_{\operatorname{Sur},p}$ nor  $\tilde d_{\operatorname{Reg},p}$ are comparable with $\d$.


\subsection{Topologies induced by relaxed distances}

Any metric or pseudometric $d$ defines a topology $\tau$ characterized
by the property that given a sequence $X_n$, we have convergence
$X_n\stackrel{\tau}{\to} X$ if and only if $d(X_n,X)\to 0$.  In
particular, the Gromov-Hausdorff distance induces a topology on the
set of isometry classes of compact metric spaces.  The
Gromov-Hausdorff topology is a fairly weak topology; for example,
there are many compact sets. Proposition 7.4.12 in \cite{bbi}
characterizes the Gromov-Hausdorff convergence in terms of
$\epsilon$-nets, implying that if a sequence $\{X_n\}$ converges in
the Gromov-Hausdorff topology, then for all $\epsilon>0$, the
cardinality of $\epsilon$-nets is uniformly bounded over
$X_n$. Therefore if a class $\mathcal X$ of metric spaces is
pre-compact (i.e. any sequence of elements of $\mathcal X$ has a
convergent subsequence) in the Gromov-Hausdorff topology, then for
every $\epsilon > 0$ the size of a minimal $\epsilon$-net is uniformly
bounded over all elements of $\mathcal X$.  The analysis in \cite{bbi}
shows that this property of $\mathcal X$, along with the fact that the
diameters of its members are uniformly bounded (what is called totally
boundedness, Definition \ref{7.4.14}), is sufficient for
pre-compactness (Theorem \ref{7.4.15}).  

Let $\hat\tau_{\mathcal A, p}$ ($1\leq p \leq \infty$), and $\tilde
\tau_{\mathcal A,\infty}$ denote the topologies induced by the
pseudometrics $\hat d_{\mathcal A, p}$ ($1\leq p\leq \infty$) and
$\tilde d_{\mathcal A, \infty}$, respectively.  We obtain an analogous
characterization of pre-compact sets in the topology for
$\hat{\tau}_{\mathcal A, p}$ for $1\leq p \leq \infty$ in
Corollary~\ref{cor} below. 




\begin{proposition}{\cite[7.4.12]{bbi}} \label{7.4.12}
For compact metric spaces $X$ and $\{X_n\}_{n=1}^\infty$, $X_n\stackrel{\tau_{\mathcal{GH}}}{\longrightarrow}X$ if and only if the following holds. For every $\epsilon>0$ there exist a finite
$\epsilon$-net $S$ in $X$ and an $\epsilon$-net $S_n$ in each $X_n$ such that $S_n \stackrel{\tau_{\mathcal{GH}}}{\longrightarrow}S$.
Moreover these $\epsilon$-nets can be chosen so that, for all sufficiently large $n$, $S_n$ have the same cardinality as $S$.
\end{proposition}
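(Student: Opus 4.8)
The plan is to reduce everything to the correspondence formulation of the Gromov--Hausdorff distance in equation~\eqref{gh_cont}, together with the elementary bound $\d(X,S)\le\epsilon$, valid whenever $S$ is an $\epsilon$-net of $X$ (since $\d(X,S)\le d_{\mathcal H}^X(X,S)\le\epsilon$ using the identity embedding $S\subset X$), and the triangle inequality for $\d$. Recall from~\eqref{gh_cont} that $\d(A,B)=\tfrac12\inf_{R\in\mathcal R(A,B)}\operatorname{dis}(R)$, where $\operatorname{dis}(R)=\sup_{(a,b),(a',b')\in R}|d_A(a,a')-d_B(b,b')|$ is the distortion of the correspondence $R$.

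For the ``if'' direction, fix $\epsilon>0$ and suppose we are given finite $\epsilon$-nets $S\subset X$ and $S_n\subset X_n$ with $S_n\stackrel{\tau_{\mathcal{GH}}}{\to}S$. Then $\d(X,S)\le\epsilon$ and $\d(X_n,S_n)\le\epsilon$, so by the triangle inequality
\[
\d(X_n,X)\le \d(X_n,S_n)+\d(S_n,S)+\d(S,X)\le 2\epsilon+\d(S_n,S).
\]
Since $\d(S_n,S)\to 0$, this gives $\limsup_n\d(X_n,X)\le 2\epsilon$. As the hypothesis furnishes such nets for every $\epsilon>0$, the bound holds for all $\epsilon$, whence $\limsup_n\d(X_n,X)=0$ and $X_n\stackrel{\tau_{\mathcal{GH}}}{\to}X$.

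For the ``only if'' direction, fix $\epsilon>0$ and choose a finite $(\epsilon/2)$-net $S=\{p_1,\dots,p_k\}$ of $X$, which exists by compactness. Since $\d(X_n,X)\to0$, equation~\eqref{gh_cont} lets us pick correspondences $R_n\in\mathcal R(X_n,X)$ whose distortion $\delta_n:=\operatorname{dis}(R_n)$ tends to $0$ (take $\operatorname{dis}(R_n)\le 2\,\d(X_n,X)+1/n$). For each $i$ select $q_i^n\in X_n$ with $(q_i^n,p_i)\in R_n$ and set $S_n:=\{q_1^n,\dots,q_k^n\}$. I would then verify three things. First, the correspondence $\{(q_i^n,p_i)\}_i$ between $S_n$ and $S$ has distortion at most $\delta_n$, so $\d(S_n,S)\le\delta_n/2\to0$, giving $S_n\stackrel{\tau_{\mathcal{GH}}}{\to}S$. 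Second, $S_n$ is an $\epsilon$-net of $X_n$ for large $n$: any $x\in X_n$ is matched by $R_n$ to some $p\in X$, there is $p_i$ with $d_X(p,p_i)\le\epsilon/2$, and hence $d_{X_n}(x,q_i^n)\le d_X(p,p_i)+\delta_n\le\epsilon$ once $\delta_n\le\epsilon/2$. Third, the cardinalities match for large $n$: the minimal separation $c=\min_{i\ne j}d_X(p_i,p_j)$ is positive because $S$ is finite with distinct points, and distortion control gives $d_{X_n}(q_i^n,q_j^n)\ge c-\delta_n>0$ once $\delta_n<c$, so the $q_i^n$ are pairwise distinct and $|S_n|=k=|S|$.

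The main obstacle is the ``only if'' direction, and specifically the bookkeeping that simultaneously guarantees that the pulled-back set $S_n$ really is an $\epsilon$-net rather than merely an $(\epsilon+\delta_n)$-net, and that it retains the cardinality of $S$. Both are resolved by exploiting the finiteness of $S$: finiteness forces a positive minimal separation $c$, which eventually dominates the vanishing distortion $\delta_n$ and rules out collisions among the $q_i^n$, while starting from an $(\epsilon/2)$-net (rather than an $\epsilon$-net) absorbs the $\delta_n$ slack in the covering radius. Everything else is a direct application of the triangle inequality and the correspondence identity $\d=\tfrac12\inf\operatorname{dis}$ read off from~\eqref{gh_cont}.
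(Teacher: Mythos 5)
The paper does not actually prove this proposition---it is quoted verbatim from Burago--Burago--Ivanov \cite{bbi} as Proposition 7.4.12---so there is no in-paper argument to compare against. Your proof is correct and is essentially the standard one: the ``if'' direction is just the triangle inequality combined with $\d(X,S)\leq d^X_{\mathcal H}(X,S)\leq \epsilon$ for an $\epsilon$-net $S\subset X$, and the ``only if'' direction pulls a finite $(\epsilon/2)$-net of $X$ back through correspondences $R_n$ of vanishing distortion $\delta_n$, with the halved covering radius absorbing the $\delta_n$ slack and the positive minimal separation of the finite net forcing the pulled-back points to stay distinct, which preserves cardinality. The one cosmetic mismatch is that your $S_n$ is guaranteed to be an $\epsilon$-net of $X_n$ only once $\delta_n\leq\epsilon/2$, whereas the statement asks for an $\epsilon$-net in \emph{each} $X_n$; this is repaired by substituting an arbitrary finite $\epsilon$-net of $X_n$ for the finitely many remaining indices, which does not affect the convergence $S_n\stackrel{\tau_{\mathcal{GH}}}{\to}S$.
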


Note that by construction (Definition \ref{d_hat}) the characterization of convergence by $\epsilon$-nets from Proposition~\ref{7.4.12} is also true when substituting $\tau_{\mathcal{GH}}$ by $\hat{\tau}_{\mathcal A, p}$, $1\leq p \leq \infty$.

\begin{definition}{\cite[7.4.14]{bbi}} \label{7.4.14} A class $\mathcal X$ of compact metric spaces is totally bounded if 
\begin{enumerate}
\item There exists a constant $D$ such that for all $X\in \mathcal X$, $\operatorname{diam}(X)<D$.
\item  For every $\epsilon > 0$ there exists a number $N_\epsilon$  such that
every $X \in \mathcal X$ contains an $\epsilon$-net consisting of at most $N_\epsilon$ points.
\end{enumerate}
\end{definition}

\begin{theorem}{\cite[7.4.15]{bbi}} \label{7.4.15}
Any uniformly totally bounded class $\mathcal X$ of compact metric spaces is pre-compact in the Gromov-Hausdorff topology. 
\end{theorem}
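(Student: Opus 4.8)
The plan is to prove pre-compactness directly by a diagonal extraction on distance data: given an arbitrary sequence $\{X_k\}_{k=1}^\infty$ in $\mathcal X$, I will pass to a subsequence converging in $\tau_{\mathcal{GH}}$ to a compact limit space constructed by hand. First I would exploit uniform total boundedness (Definition~\ref{7.4.14}): for each $j\geq 1$ I choose in every $X_k$ a $(1/j)$-net of cardinality at most $N_{1/j}$, and arrange these nets to be nested in $j$ so that their union enumerates a countable dense sequence $p^k_1, p^k_2, \ldots$ of $X_k$. I adopt the convention, padding with repeated points if necessary, that the first $N_{1/j}$ points always constitute a $(1/j)$-net, so that the cutoff index $N_{1/j}$ is the \emph{same} for all $k$.

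The essential tool is a compactness argument on the pairwise distances. Since $\operatorname{diam}(X_k) < D$ uniformly, every distance $d^k_{i\ell} := d_{X_k}(p^k_i, p^k_\ell)$ lies in $[0,D]$, so the array $(d^k_{i\ell})_{i,\ell}$ is a point of the compact metrizable space $[0,D]^{\mathbb N \times \mathbb N}$. By a Cantor diagonal extraction over the countably many pairs $(i,\ell)$, I can pass to a subsequence $\{X_{k_m}\}$ along which $d^{k_m}_{i\ell} \to d_{i\ell}$ for every fixed pair. The limiting array $(d_{i\ell})$ is symmetric, vanishes on the diagonal, and satisfies the triangle inequality, since each of these is a closed condition preserved under entrywise limits; hence it defines a pseudometric on the index set $\mathbb N$.

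I would then let $X$ be the completion of the quotient metric space obtained from this pseudometric by identifying indices at zero distance. To see that $X$ is compact I check that it is totally bounded and complete: completeness is automatic for a completion, and for each $j$ the images of the indices $1,\ldots, N_{1/j}$ form a $(1/j)$-net of the dense index set, because the net property $\min_{i \le N_{1/j}} d^{k_m}_{i\ell} \le 1/j$ passes to the entrywise limit; totally bounded plus complete yields compactness. Finally I would deduce $\d(X_{k_m}, X) \to 0$ from the $\epsilon$-net characterization of Proposition~\ref{7.4.12}: for each $j$ the finite nets $\{p^{k_m}_1,\ldots,p^{k_m}_{N_{1/j}}\}$ and the corresponding finite net in $X$ are finite metric spaces of equal cardinality whose distance matrices converge entrywise, hence converge in $\tau_{\mathcal{GH}}$, which is precisely the hypothesis of Proposition~\ref{7.4.12}.

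The main obstacle is the bookkeeping that upgrades entrywise convergence of distance matrices to genuine Gromov-Hausdorff convergence: I must ensure the nets across different $X_k$ have matching cardinalities and consistent nesting, so that the limit array indexes a single coherent point set, and I must verify that the limiting $(1/j)$-net property survives both the entrywise limit and the passage to the completion, so that the hand-built space $X$ is actually compact rather than merely totally bounded on a countable skeleton. Once the nets are organized consistently, the remaining verifications (pseudometric axioms in the limit, total boundedness of $X$, and the invocation of Proposition~\ref{7.4.12}) are routine.
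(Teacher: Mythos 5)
Your argument is correct and is essentially the standard proof of Gromov's precompactness theorem as given in the cited reference \cite[7.4.15]{bbi}, which the paper itself imports without proof: nested uniform nets, diagonal extraction on the bounded distance arrays, passage to the completed quotient of the limiting pseudometric, and convergence via the $\epsilon$-net criterion of Proposition~\ref{7.4.12}. The bookkeeping issues you flag (cumulative net cardinalities, padding, and the survival of the net property under entrywise limits and completion) are exactly the routine verifications carried out in that reference, so nothing essential is missing.
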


By Theorem~\ref{7.4.15}, we know that if $\mathcal X$ is totally
bounded and $\{X_n\}_{n=1}^\infty$ is a sequence in $\mathcal X$ then
it contains a convergent subsequence in $\mathcal X$. Since
$\hat{d}_{\mathcal A, p} \leq \d$, that subsequence is also convergent
in $\hat{\tau}_{\mathcal A,p}$, which immediately implies the
following corollary: 

\begin{corollary} \label{cor}
Any uniformly totally bounded class $\mathcal X$ of compact metric spaces is pre-compact in the topology $\hat\tau_{\mathcal A, p}$ for $1\leq p \leq \infty$. 
\end{corollary}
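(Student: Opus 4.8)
The plan is to obtain pre-compactness in $\hat\tau_{\mathcal A,p}$ as a purely formal consequence of pre-compactness in the Gromov-Hausdorff topology, exploiting the fact that $\hat\tau_{\mathcal A,p}$ is coarser than $\tau_{\mathcal{GH}}$. Recall that a class is pre-compact precisely when every sequence drawn from it admits a convergent subsequence, so it suffices to fix an arbitrary sequence $\{X_n\}_{n=1}^\infty$ in $\mathcal X$ and produce a subsequence that converges in $\hat\tau_{\mathcal A,p}$.

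First I would invoke Theorem~\ref{7.4.15}: since $\mathcal X$ is uniformly totally bounded, it is pre-compact in the Gromov-Hausdorff topology, so some subsequence $\{X_{n_k}\}$ satisfies $\d(X_{n_k},X)\to 0$ for a compact metric space $X$. Here $X$ lies in the closure of $\mathcal X$ in $\tau_{\mathcal{GH}}$; pre-compactness does not require the limit to belong to $\mathcal X$ itself, so no additional hypothesis on $\mathcal X$ is needed at this step.

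Next I would transfer this convergence to the relaxed topology using the comparison inequality $\hat d_{\mathcal A,p}\le \d$ established in the lemma above, which is valid for all $1\le p\le\infty$. Applying it to the pair $(X_{n_k},X)$ yields $\hat d_{\mathcal A,p}(X_{n_k},X)\le \d(X_{n_k},X)\to 0$, so the same subsequence converges to $X$ in $\hat\tau_{\mathcal A,p}$. Since the starting sequence was arbitrary, every sequence in $\mathcal X$ has a $\hat\tau_{\mathcal A,p}$-convergent subsequence, which is exactly pre-compactness in $\hat\tau_{\mathcal A,p}$.

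There is essentially no technical obstacle beyond the one already resolved: the whole argument rests on the inequality $\hat d_{\mathcal A,p}\le \d$, which is precisely what guarantees that the identity map $(\mathcal X,\tau_{\mathcal{GH}})\to(\mathcal X,\hat\tau_{\mathcal A,p})$ is continuous and therefore carries convergent subsequences to convergent subsequences. The only point worth stating carefully is the scope of that bound — it must hold across the full range $1\le p\le\infty$ so that the corollary also covers the $p=\infty$ case — and this is exactly the content of the comparison lemma.
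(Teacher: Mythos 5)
Your proposal is correct and follows exactly the paper's own argument: extract a Gromov--Hausdorff convergent subsequence via Theorem~\ref{7.4.15} and transfer convergence using the comparison inequality $\hat d_{\mathcal A,p}\leq \d$. No differences worth noting.
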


\subsection{Local topological properties}

In the space of compact metric spaces we know that $\d(X,Y)=0$ if and
only if $X$ and $Y$ are isometric. The example at the beginning of
Section \ref{sec:semimetric} shows that this is not true for $\dpp$ in
general. However, in this section we show it is true for \emph{most}
finite $X$.

\begin{definition}
Let $X$ a finite metric space. We say that $X$ is generic if $X\in \mathcal G_{<\infty}$ and all pairwise distances in $X$ are different and non-zero. 
\end{definition}

The name generic is justified in the following sense: 
if $X\in \mathcal G_n$ is not generic, for all $\epsilon>0$ there exists $Y\in \mathcal G_n$ such that $\d(X,Y)<\epsilon$ and $Y$ is generic. Also, if $X\in \mathcal G_n$ is generic there exists $\epsilon>0$ such that for all $Y\in \mathcal G_n$ that satisfy$\d(X,Y)<\epsilon$ we have that $Y$ is also generic. Which proves the following remark:
\begin{remark}
The set of generic metric spaces is dense in $\tau_{\mathcal{GH}}|_{\mathcal G_{<\infty}}$ 
and open in $\tau_{\mathcal{GH}}|_{\mathcal G_n}$.
\end{remark}

\begin{lemma} If $X$ and $Y$ are generic and  $\dpp(X,Y)=0$ then $X$ and $Y$ are isometric. 
\end{lemma}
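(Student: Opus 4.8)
The plan is to analyze what $\dpp(X,Y)=0$ forces on an optimal feasible matrix $\mathbf Z\in\mathcal A$, and then extract from it a genuine isometry using the assumption that all pairwise distances in $X$ (and in $Y$) are distinct and nonzero. Since $X,Y$ are generic, by the definition of generic they have the same number of points $n$ (being in $\mathcal G_n$); I would first reduce to the cleanest case $\mathcal A=\operatorname{Reg}$, where the constraints force $\hat{\mathbf Z}$ to behave like a doubly-stochastic lifting of a permutation, and then note that the $\mathcal{GH}$ and $\operatorname{Sur}$ cases follow by the containments established just before the lemma (or by a parallel argument). The hypothesis $\dpp(X,Y)=0$ means that the optimal $\mathbf Z$ satisfies $\operatorname{Trace}(\Gamma^{(p)}\hat{\mathbf Z})=0$; since $\Gamma_{ij,i'j'}^p\ge 0$ and $\hat{\mathbf Z}\ge 0$ entrywise, this is equivalent to the support condition
\[
\hat{\mathbf Z}_{ij,i'j'}>0 \quad\Longrightarrow\quad \Gamma_{ij,i'j'}=|d_X(x_i,x_{i'})-d_Y(y_j,y_{j'})|=0 .
\]
So every pair of index-pairs carrying mass must match distances exactly.

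Next I would exploit genericity to turn this support/distance-matching condition into the graph of a bijection. Consider the ``marginal'' assignment $\mathbf z_{ij}=\hat{\mathbf Z}_{ij,nm+1}$ (equivalently the entries of $\hat{\mathbf Z}\mathbf 1$, up to the normalization $\hat{\mathbf Z}\mathbf 1=n\mathbf z$); the linear constraints of $\operatorname{Reg}$ make this a doubly stochastic $n\times n$ array. The key step is to show the support of $\hat{\mathbf Z}$ is supported on the graph of a single permutation $\sigma$: if $\hat{\mathbf Z}_{ij,i'j'}>0$ then reading off the diagonal-type entries (those with $i=i'$, forced by the constraint $\mathbf Z_{ij,ij'}=0$ for $j\ne j'$, and likewise $j=j'$) pins down a partial correspondence, and distinctness of distances forces consistency: if $(i,j)$ and $(i,j')$ were both in the support with $j\ne j'$, comparing $d_X(x_i,x_{i'})$ against two distinct $d_Y$-values for a common $(i',\cdot)$ partner would contradict that all $d_Y$-distances are distinct. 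Iterating this rigidity over all pairs shows the support is exactly $\{(i,\sigma(i))\}$ for a bijection $\sigma$, and the matching condition then reads $d_X(x_i,x_{i'})=d_Y(y_{\sigma(i)},y_{\sigma(i')})$ for all $i,i'$, i.e.\ $\sigma$ is an isometry.

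The main obstacle I anticipate is the rigidity argument in the middle step: a priori $\hat{\mathbf Z}$ need only be positive semidefinite and doubly stochastic on its marginals, so its support could in principle be a genuine (fractional) mixture of several permutations rather than a single one, and one must rule out that the zero-objective condition is met by such a spread-out $\mathbf Z$. The leverage that makes this work is precisely genericity: because all $n(n-1)/2$ pairwise distances of $Y$ are distinct, the relation ``$d_X(x_i,x_{i'})=d_Y(y_j,y_{j'})$'' determines the unordered pair $\{j,j'\}$ uniquely from $\{i,i'\}$, so the distance-matching on the support propagates into a consistent global pairing; I would make this precise by fixing any one index in the support of a marginal, using the off-diagonal constraints to see each row/column of the support is a singleton, and then verifying the resulting map preserves every distance. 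Once a distance-preserving bijection $\sigma$ is produced, $X$ and $Y$ are isometric by definition, completing the proof.
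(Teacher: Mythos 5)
Your core mechanism is exactly the paper's: a zero objective together with entrywise nonnegativity forces $\hat{\mathbf Z}_{ij,i'j'}>0\Rightarrow d_X(x_i,x_{i'})=d_Y(y_j,y_{j'})$, the covering constraint $\sum_{j,j'}\mathbf Z_{ij,i'j'}\geq 1$ guarantees every pair $\{i,i'\}$ is matched to some pair of $Y$, and distinctness of the pairwise distances in $Y$ makes that matched pair unique, yielding a distance-preserving correspondence. That part is fine and is essentially the published argument.

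Two structural missteps need fixing. First, your reduction to $\mathcal A=\operatorname{Reg}$ ``by the containments'' runs backwards: since $\operatorname{Reg}\subset\operatorname{Sur}\subset\mathcal{GH}$ one has $\tilde d_{\mathcal{GH},p}\leq\tilde d_{\operatorname{Sur},p}\leq\tilde d_{\operatorname{Reg},p}$, so $\tilde d_{\mathcal{GH},p}(X,Y)=0$ is the \emph{weakest} hypothesis and is not implied by the $\operatorname{Reg}$ case; moreover your rigidity step leans on the constraints $\mathbf Z_{ij,ij'}=0$ for $j\neq j'$, which are absent from $\mathcal{GH}$. The cure is to argue directly from the constraint $\sum_{j,j'}\mathbf Z_{ij,i'j'}\geq 1$, which is common to all three feasible sets (and which your final paragraph in effect already does). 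Second, ``generic'' in the paper only means all pairwise distances are distinct and nonzero; it does not place $X$ and $Y$ in the same $\mathcal G_n$, so you cannot assume $|X|=|Y|$ at the outset. Instead, assume WLOG $|X|\geq|Y|$ and observe that the $\binom{|X|}{2}$ distinct distances of $X$ are all realized among the $\binom{|Y|}{2}$ distinct distances of $Y$, forcing $|X|=|Y|$ as a \emph{conclusion}; this is how the paper handles it. With those two repairs your argument is the paper's proof.
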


\begin{proof}
Assume without loss of generality $|X|\geq|Y|$ and $\dpp(X,Y)=0$. Let $\mathbf Z$ the solution of \eqref{gh_p} for $X,Y$ with objective value 0. The constraint $\sum_{j,j'} \mathbf Z_{ij,i'j'}=1$ for all $i,i'$ implies that, given $i\neq i'$ there exists $j,j'$ such that $\mathbf Z_{ij,i'j'}>0$. Since the objective value is 0, that implies that $d_{X}(x_i,x_{i'})=d_Y(y_j,y_{j'})$. Since all pairwise distances in $X$ are different, that completely determines all distances in $Y$ and in particular it implies $|X|=|Y|$, $X$ and $Y$ are isometric, and the unique solution of $\eqref{gh_p}$ corresponds to the isometry between $X$ and $Y$. 
\end{proof}

\begin{corollary}
If $X\in \mathcal G_n$ is generic there exists a neighborhood of $X$ in $\mathcal G_{n}$ such that for all $Y$ in that neighborhood $$\d(X,Y)=\frac{1}{2}\max_{i,j=1\ldots n}|d_{X}(x_i,x_j)-d_{Y}(y_i,y_j)|$$ 
($Y$ is a small enough perturbation of a metric space isometric with $X$ where we label the points such that the isometry is $x_i\mapsto y_i$ for all $i$). In particular we can think of the neighborhood where that property holds as the neighborhood of $X$ with radius $\Delta/n$ where $\Delta$ is the smallest non-zero entry of the matrix $\Gamma(X,X)$. In this setting we have that $$\dpp(X,Y)=\frac12 \left( \frac1{n^2} \sum_{i,j}|d_{X}(x_i,x_j)-d_{Y}(y_i,y_j)|^p  \right)^{1/p}$$
in the neighborhood of $X$ of radius $\Delta^p/n$. And, in the neighborhood of $X$ of radius $\Delta$ we have
$$\dinf(X,Y)=\d(X,Y).$$

This implies that the topologies $\tau_{\mathcal{GH}}|_{\mathcal G_n}$ and $\tilde\tau_{\mathcal{GH},p}|_{\mathcal G_n}$ are equivalent for all finite $n$ and $p$.
And we have $\dinf$ and $\d$ are generically locally the same.
 
\end{corollary}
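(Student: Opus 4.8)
The plan is to show that, for a generic $X$, both optimization problems defining $\d$ and $\dpp$ collapse onto the single correspondence given by the identity bijection, after which every assertion reduces to an elementary comparison of entrywise $\ell^p$-norms of distance matrices. Throughout, write $\Delta$ for the smallest nonzero entry of $\Gamma(X,X)$; genericity guarantees $\Delta>0$, and since the entries of $\Gamma(X,X)$ include both the pairwise distances of $X$ (through degenerate index pairs $i=i'$, where $d_X(x_i,x_{i'})=0$) and the gaps $|d_X(x_i,x_{i'})-d_X(x_j,x_{j'})|$ between distinct distances, this one quantity $\Delta$ simultaneously controls the minimal distance in $X$ and the minimal separation between distinct distances. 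All three conclusions are obtained by making the relevant radius small in terms of $\Delta$.

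I would first fix the labeling and prove the formula for $\d$. Given $Y\in\mathcal G_n$ with $\d(X,Y)$ below the stated radius, take an optimal correspondence $R$ and, for each $x_i$, select one $y$ with $(x_i,y)\in R$; this defines a map $f\colon X\to Y$ of distortion at most $2\d(X,Y)$. If $f$ were not injective, two points $x_i\neq x_{i'}$ would map to a common $y$, forcing $\operatorname{dis}(R)\geq d_X(x_i,x_{i'})\geq\Delta$; since $2\d(X,Y)<\Delta$ this is impossible, so $f$ is a bijection. Relabelling $Y$ by $f$ so that the correspondence is $x_i\mapsto y_i$, the identity bijection is itself a feasible correspondence of distortion $\eta:=\max_{i,j}|d_X(x_i,x_j)-d_Y(y_i,y_j)|$, and the sandwich $\d(X,Y)\leq\tfrac12\eta\leq\d(X,Y)$ (the second inequality because $\eta\leq 2\d(X,Y)$ by construction) collapses to equality, giving $\d(X,Y)=\tfrac12\eta$, which is the first displayed formula.

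The crux, and the step I expect to be the main obstacle, is the matching lower bound showing the SDP cannot beat the identity correspondence $\mathbf Z^\ast$ (whose $\hat{\mathbf Z}^\ast=ee^\top$, with $e$ the indicator vector of the diagonal index pairs $(i,i)$, is rank one and easily checked to lie in $\mathcal{GH}$). Here I would avoid duality and argue directly: grouping the objective by the column index $(j,j')$ and using $\hat{\mathbf Z}\geq0$ together with the constraint $\sum_{i,i'}\hat{\mathbf Z}_{ij,i'j'}\geq1$, every feasible $\hat{\mathbf Z}$ satisfies $\operatorname{Trace}(\Gamma^{(p)}\hat{\mathbf Z})\geq\sum_{j,j'}\min_{i,i'}\Gamma^{(p)}_{ij,i'j'}$. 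Genericity then identifies the inner minimizer: since $d_Y(y_j,y_{j'})$ lies within $\eta<\Delta/2$ of $d_X(x_j,x_{j'})$ while every other $d_X(x_i,x_{i'})$ is at least $\Delta$ away, the closest $X$-distance is the aligned one, so $\min_{i,i'}\Gamma_{ij,i'j'}=|d_X(x_j,x_{j'})-d_Y(y_j,y_{j'})|$ and the bound equals precisely the objective value of $\mathbf Z^\ast$; this yields the $\dpp$-formula. The same grouping applied to the $\max$-objective (any feasible $\mathbf Z$ must keep a nonzero entry in each column group $(j,j')$, whose $\Gamma$ is at least the aligned value) gives $\dinf(X,Y)=\tfrac12\eta=\d(X,Y)$. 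The genuine difficulty is exactly that this separation estimate must hold uniformly over all column groups and over all feasible $\mathbf Z$, not merely over bijective correspondences, which is what pins down the quantitative radius in terms of $\Delta$.

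Finally, the two local formulas exhibit $\d$ and $\dpp$ on the neighborhood as $\tfrac12\|D_X-D_Y\|_\infty$ and $\tfrac1{2n^{2/p}}\|D_X-D_Y\|_p$, where $D_X,D_Y$ are the commonly labelled distance matrices. Because all $\ell^p$-norms on the finite-dimensional space of symmetric $n\times n$ matrices are equivalent, there are constants with $n^{-2/p}\,\d(X,Y)\leq\dpp(X,Y)\leq\d(X,Y)$ on the neighborhood, the upper bound being the global estimate $\dpp\le\dinf\le\d$ from Proposition~\ref{bound} and the comparison subsection. Hence the $\d$- and $\dpp$-neighborhood filters coincide at every generic $X$, so the two topologies agree on the open dense set of generic spaces; this is the precise content of $\dinf$ and $\d$ being generically locally the same. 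I would flag that one should not read this as a \emph{global} identification of the two topologies on $\mathcal G_n$: the graph-isomorphism construction of \eqref{spaces} produces non-isometric $X,Y\in\mathcal G_n$ with $\dpp(X,Y)=0$, so the equivalence is necessarily the generic/local one just proved.
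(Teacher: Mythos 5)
Your proof is correct, and it is worth noting that the paper states this corollary without any proof at all (the only justification offered is the parenthetical remarks inside the statement), so you are supplying an argument the paper omits. Your route is the natural quantitative extension of the paper's preceding lemma: where the lemma only uses the constraint $\sum_{i,i'}\mathbf Z_{ij,i'j'}\geq 1$ to find \emph{some} positive entry in each $(j,j')$-group when the objective is exactly $0$, you turn the same grouping into the lower bound $\operatorname{Trace}(\Gamma^{(p)}\hat{\mathbf Z})\geq\sum_{j,j'}\min_{i,i'}\Gamma^p_{ij,i'j'}$ and then use the $\Delta$-separation of the distances of a generic $X$ to identify the minimizer as the aligned pair; this is exactly the missing step that shows the SDP cannot beat the identity correspondence, and it buys explicit bi-Lipschitz constants $n^{-2/p}\,\d\leq\dpp\leq\d$ on the neighborhood. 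Two caveats, both reflecting on the paper rather than on you: (i) your argument requires $\eta<\Delta/2$, i.e.\ $\d(X,Y)<\Delta/4$, which is smaller than the radii $\Delta/n$ (for $n\leq 3$), $\Delta^p/n$ and especially $\Delta$ quoted in the statement; the paper's constants appear heuristic and the radius $\Delta$ for the $\dinf$ claim does not seem to follow from any argument of this type, but the qualitative assertion (existence of such a neighborhood) is what the corollary is used for and your proof establishes it. (ii) Your flag about the final sentence is apt: since the construction \eqref{spaces} combined with the discussion in Section~\ref{sec:semimetric} yields non-isometric $X,Y\in\mathcal G_n$ with $\tilde d_{\mathcal{GH},p}(X,Y)=0$ but $\d(X,Y)>0$, the two topologies cannot literally coincide on all of $\mathcal G_n$, and the equivalence proved here is the local one at generic points (which form a dense open subset of $\mathcal G_n$). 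This is a genuine imprecision in the paper's statement that your proof correctly isolates.
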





 

  

\section{GHMatch: a rank-1 augmented Lagrangian approach towards the registration problem}

In the previous sections, we have studied an approach to the problem
of computing the Gromov-Hausdorff distance (equation~\eqref{gh}) via
semidefinite optimization.  Here we first lift the variable $\mu \in
\mathbb R^{nm}$ to a symmetric variable $\mathbf Z \in \mathbb R^{nm+1
  \times nm+1}$ such that $\operatorname{rank}(\mathbf Z)=1$.  We then
relax the non-convex rank constraint to the convex constraint $\mathbf
Z\succeq 0$.  

There are many attractive properties of the semidefinite
relaxations.  For one thing, there are many software packages that
efficiently provide global solutions to semidefinite programs (e.g.,
SDPNAL+~\cite{yang2015sdpnal+}).  Moreover, there is a great deal of
research energy directed at producing more efficient SDP solvers; the
field is rapidly evolving and solvers are getting more efficient every
day.  Furthermore, SDP relaxations have the advantage of often being
tight: in our situation, we have observed numerically that the
solution $\mathbf Z$ frequently has rank 1.  In this case, the
semidefinite optimization finds the global solution of the original
problem, and also provides a certificate of its optimality.  This
property has recently been exploited to efficiently produce
certificates of optimality of solutions found by fast non-convex
algorithms that typically may converge to local
optima~\cite{Bandeira2015note, pcc}.

On the other hand, the semidefinite relaxations have the disadvantage
that they square the number of variables of the original problem: even
with efficient solvers, this expansion makes these problems
intractable for large sets of points.  Also, when the SDP is not
tight, it may produce a high rank solution $\mathbf Z$ that may not be
easily rounded to a feasible $\mu$.   

Motivated by these concerns, in this section we propose a non-convex
optimization approach for the registration problem.  Here we trade the
global optimality guarantee and the pseudometric the semidefinite
optimization provides for computational efficiency and a guarantee of
a true (albeit not necessarily globally optimal) correspondence.  
 
We will  assume that $|X|=|Y|=n$. By restricting equation~\eqref{gh}
to this case and replacing the infinity norm by the $p$-norm
formulation  we obtain the following non-convex optimization problem,
where $\mathbf{y}\in \mathbb R^{n^2}$ is indexed by a pair of
variables $ij$ where $i,j=1,\ldots, n$. 
\begin{equation} 
\min_{\mathbf y}\langle \Gamma^{(p)}, \mathbf{yy}^\top \rangle \quad\text{ subject to } \sum_{i=1}^n \mathbf y_{ij}=1, \; \sum_{j=1}^n \mathbf y_{ij}=1, \; 0\leq \mathbf y_{ij} \leq 1 \label{non-convex}
\end{equation}

Now, instead of considering a semidefinite relaxation as we did
previously, we propose a greedy method to directly solve \eqref{non-convex}. 
Let $A\in \mathbb R^{2n\times n^2}$ such that $A\mathbf y=\mathbf 1$
if and only if $\sum_{i=1}^n \mathbf y_{ij}=1$ and $ \sum_{j=1}^n
\mathbf y_{ij}=1$ and let $b=\mathbf{1}\in \mathbb R^{2n}$.  Then
equation~\eqref{non-convex} is equivalent to the following quadratic
optimization problem with linear and box constraints.  
\begin{equation}
\min_{\mathbf y}\mathbf y ^\top  \Gamma^{(p)}\mathbf y \quad \text{ subject to } A\mathbf y= b, \; 0\leq \mathbf y \leq 1 \label{quadratic}
\end{equation}

In order to solve problem~\eqref{quadratic}, we use a projected
augmented Lagrangian approach (e.g.,
see~\cite[Algorithm~17.4]{nocedal}).

\begin{equation}
\mathcal L(\mathbf y, \lambda, \sigma) = \mathbf y ^\top  \Gamma^{(p)}\mathbf y - \lambda^T(A\mathbf y -b) + \frac\sigma2 \|A\mathbf y - b\|_2^2
\end{equation} 

We propose the algorithm GHMatch (see Algorithm \ref{algorithm}).
Theoretical convergence analysis for GHMatch is left for future
work.  In the next section, we describe numerical experiments that
indicate the performance of the algorithm.  In the experiments we
conducted, we found that this procedure converges to a local minimum
of equation~\eqref{non-convex}.  That solution may be rounded to an
actual correspondence between the point sets $\mathbf {\bar y}$, and
therefore the value $\langle \Gamma, \mathbf{\bar y} \mathbf{\bar
  y^\top} \rangle$ is an upper bound for the Gromov-Hausdorff
distance.

\begin{algorithm}
\begin{algorithmic}[1]
\State {$\mathbf y_0 \gets \frac{1}{n}\mathbf{1} \in \mathbb R^{n^2}$}
\State $\lambda_0\gets \mathbf{1} \in \mathbb R^{2n}$
\State $\sigma_0 \gets 5$
\State $\mu \gets 10$ 
    \For {$k = 0,1,2,\ldots $}
        \State $\mathbf y \gets \arg\min_{0 \leq \mathbf y\leq 1} \mathcal L(\mathbf y, \lambda_k, \sigma_k)$ \Comment{Use $\mathbf y_k$ as initial point for this minimization}
        \State $\lambda_{k+1} \gets \lambda_k -\sigma_{k}(A\mathbf y_{k+1}-b)$
        \State $\sigma_{k+1}\gets \mu \sigma_k$
    \EndFor 
    \For {$i=1,\ldots,n$} \Comment{To find the map corresponding with $\mathbf y_{T}$}
    	\State $\operatorname{map}(i)=\arg\max_{j=1,\ldots, n} \mathbf y_{T} (1+(i-1)n \colon in)$
    \EndFor
\end{algorithmic}
\caption{GHMatch\label{algorithm}}
\end{algorithm}


\section{Numerical performance}

In this section, we describe the results of a number of numerical
experiments to explore the applications of our new distance and the
performance of our augmented Lagrangian approach.

\begin{figure}
\vspace{-2.5 cm}
\includegraphics[width=0.45\textwidth]{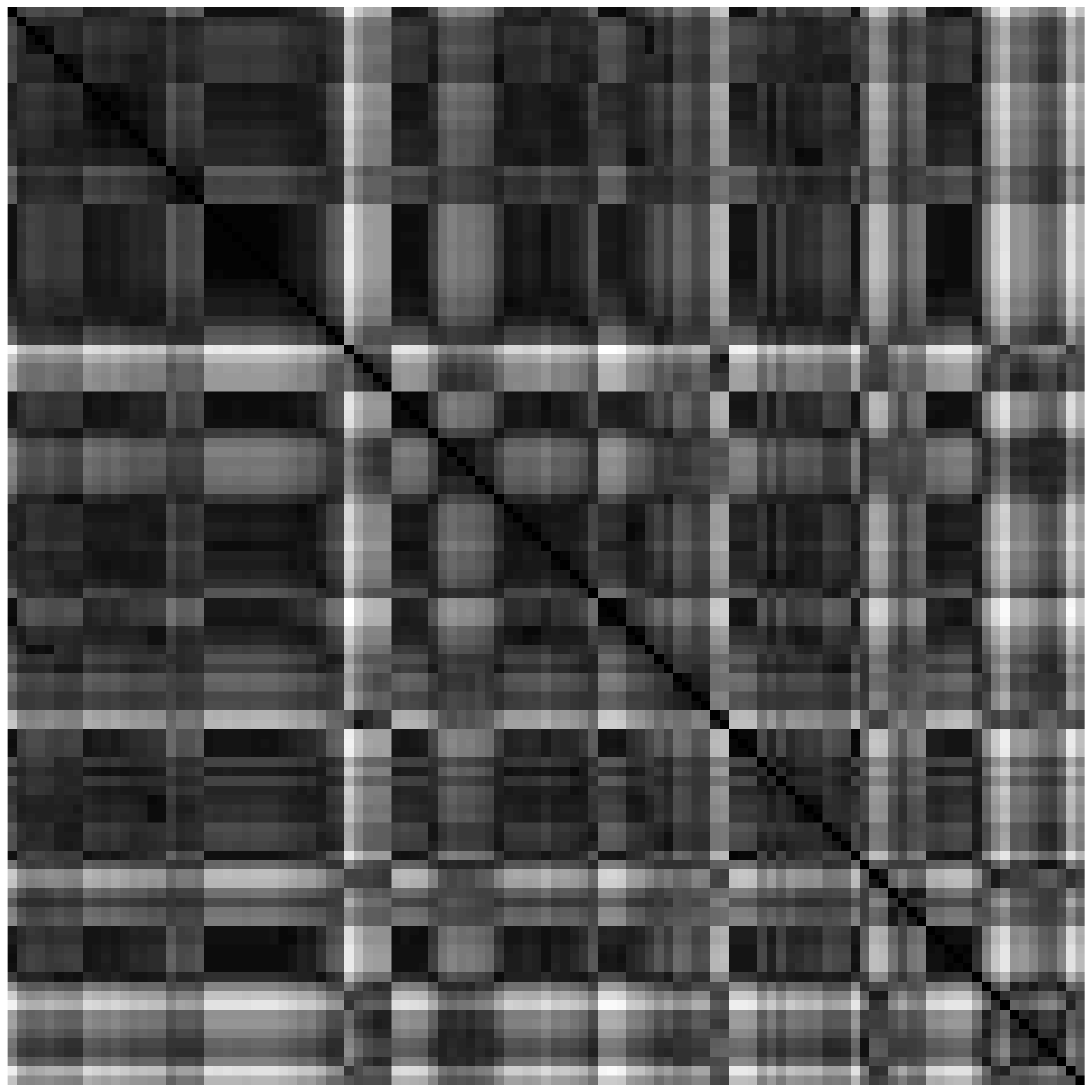}
\includegraphics[width=0.45\textwidth]{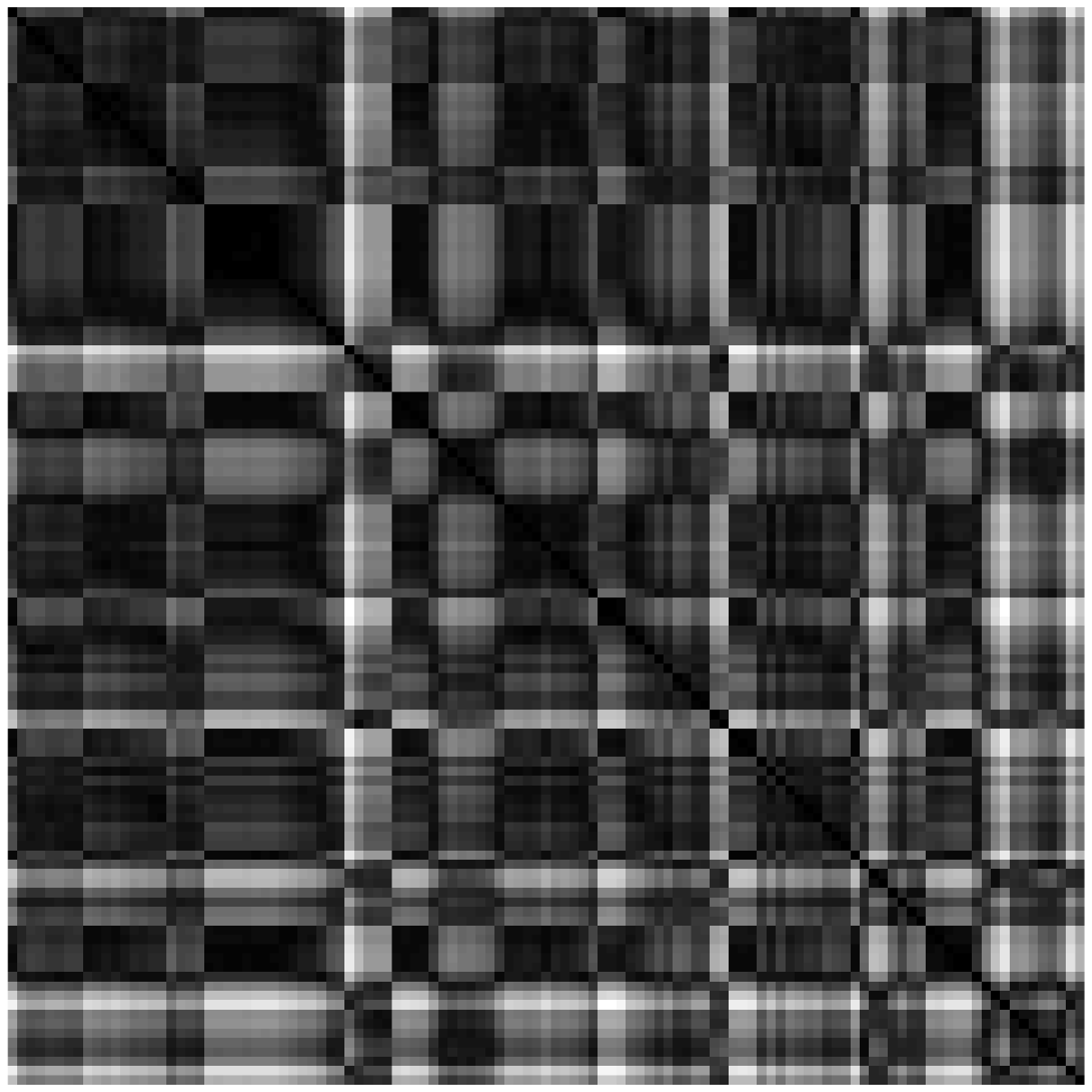}
\vspace{-2.5cm}
\caption{\label{distances} (Left) Distance matrix obtained from computing the best rigid transformation that maps the corresponding labeled landmarks from the teeth dataset described in Section~\ref{sec:teeth}. (Right) Distance matrix obtained from computing the SDP distance $\tilde d_{\mathcal{GH}, p}$ with $p=1$. Darker color corresponds to smaller distance. We observe the same distance patterns in both matrices even though the scales are different.}
\end{figure}

\subsection{Classification using the distance $\tilde d_{\mathcal
    {GH}}$}\label{sec:teeth}
In order to validate our distance numerically we compare with the
numerical experiments described in~\cite{Boyer}, using data and
algorithms available on Yaron Lipman's personal website~\cite{Lipman-website}.  As we
describe below, we find that our procedure produces results that are
competitive with this procedure.

In \cite{Boyer} the authors propose an algorithm to automatically
quantify the geometric similarity of anatomical surfaces based on a
distance inspired by the Gromov-Wasserstein distance which is
invariant under conformal maps.  They experiment with a real dataset
coming from surfaces of teeth of different species; they compare the
results of their algorithm with a method based on an expert selecting
16 landmarks on each tooth and then finding 
the best rigid
transformation to match the labeled landmarks.
Specifically, they work with a dataset consisting of 116 teeth. For
each tooth, they find the closest tooth according to each distance,
and then see whether they are in the same category.

We perform the same experiment on 115 of the teeth (since one of them
seems to be in a different scale), but without providing our algorithm
with the correspondence between the landmarks.
To be precise, we consider the metric spaces 
\[
X_i=\{p^i_1, \ldots p^i_{16}\}, \quad  i=1\ldots 115.
\]
The points of these metric spaces are
the landmarks chosen by the expert, and the metric is given by the euclidean distance between the landmarks.
We compare the distance matrix $\tilde d_{\mathcal {GH}, p}(X_i,X_j)$
with the distance obtained by the software from \cite{Lipman-website} that finds the best rigid transformation
that sends the $n$-th point of $X_i$ to the $n$-th point of $X_j$ for
$n=1,\ldots 16$. 
See Figure~\ref{distances} for a visual
comparison of the distance matrices. 

When running the nearest-neighbor classification test as described
above, we obtain very similar performance: $0.85$ frequency of success in our
distance against $0.91$ for the conformal Wasserstein distance proposed in \cite{Boyer} and $0.92$ for the landmark comparison algorithm that uses the a priori known correspondence.
We find this result very encouraging given that our algorithm does not
make any geometric assumptions about the teeth (e.g., we do not assume
they are smooth surfaces), in contrast to~\cite{Boyer}.

\subsection{Performance of GHMatch}
\begin{figure}
\includegraphics[width=0.8\textwidth, trim={0 1.5cm 0 .5cm},clip]{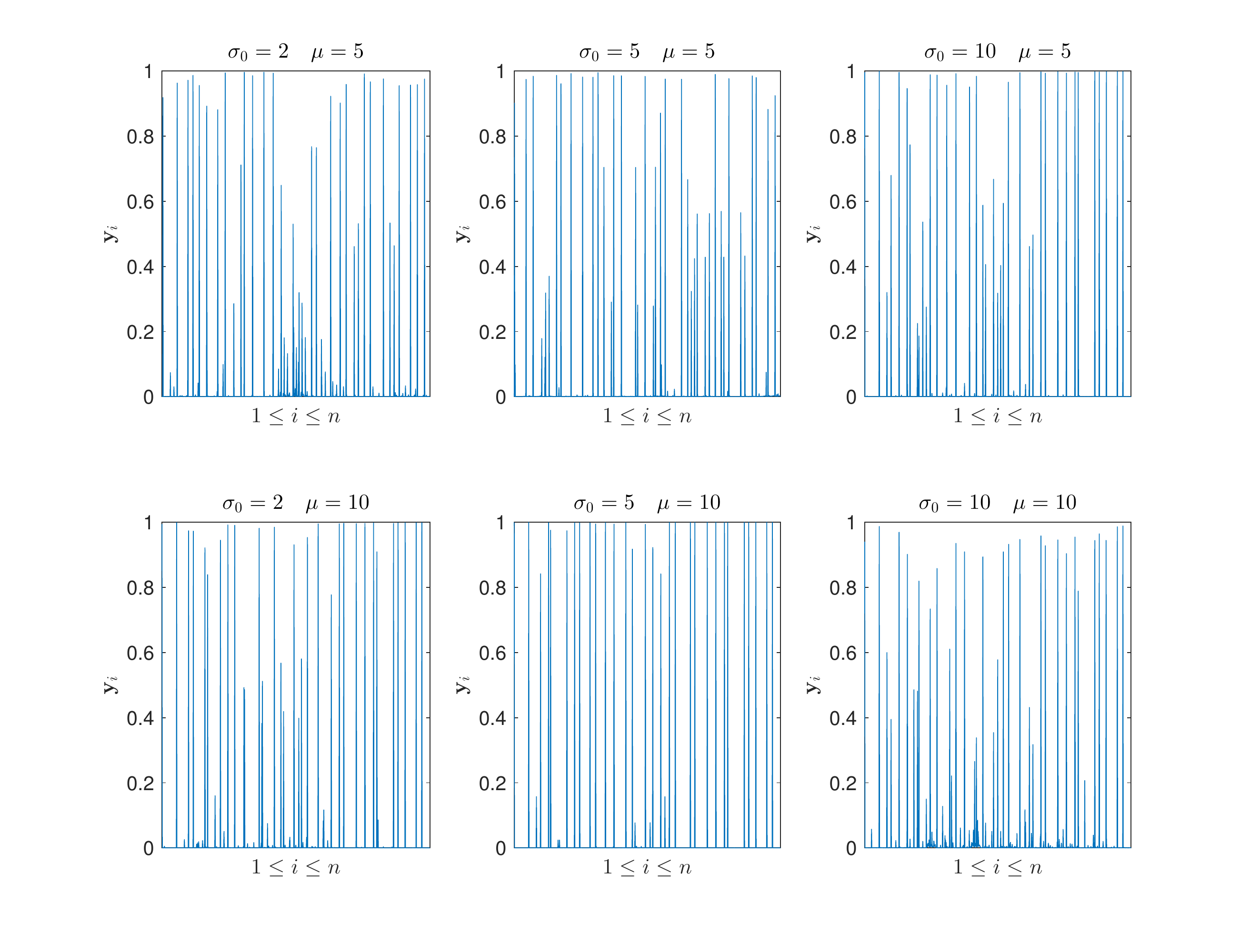}
\caption{This is the value of convergence of $\mathbf y_k$ for
  different parameters $\sigma_0$ and $\mu$, for $n=30$. All these
  $\mathbf y$ satisfy the linear constraint $A\mathbf y=b$ but the
  choice of the parameters determine how close the vector $\mathbf y$
  is to a feasible vector with entries in
  $\{0,1\}$.\label{error_plots}} 
\end{figure}

\begin{figure}
\vspace{-1 cm}
\includegraphics[height=0.32\textwidth]{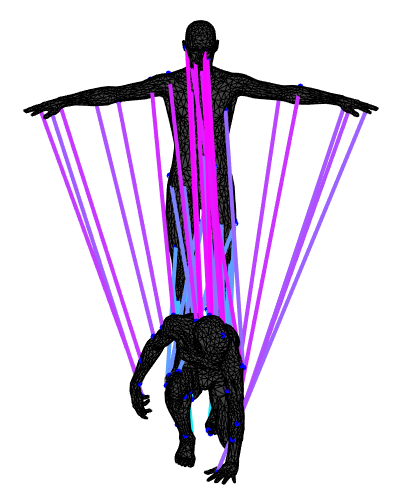}
\includegraphics[height=0.32\textwidth]{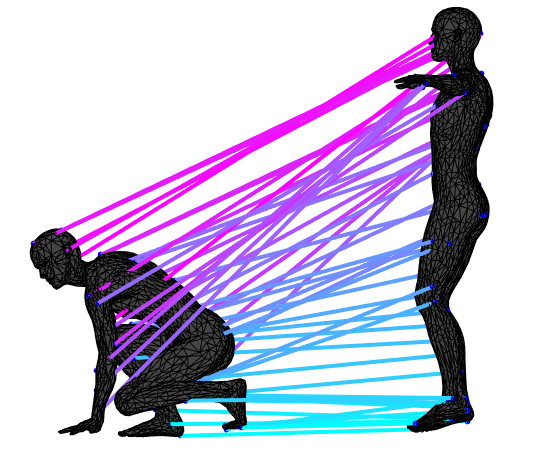}
\includegraphics[height=0.32\textwidth]{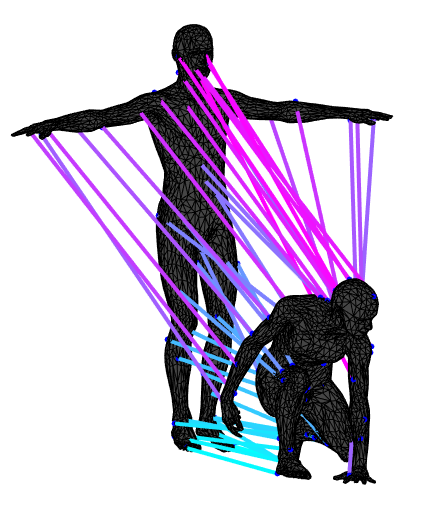}
\includegraphics[width=0.45\textwidth, trim={5cm 3.2cm 5cm 2cm},clip]{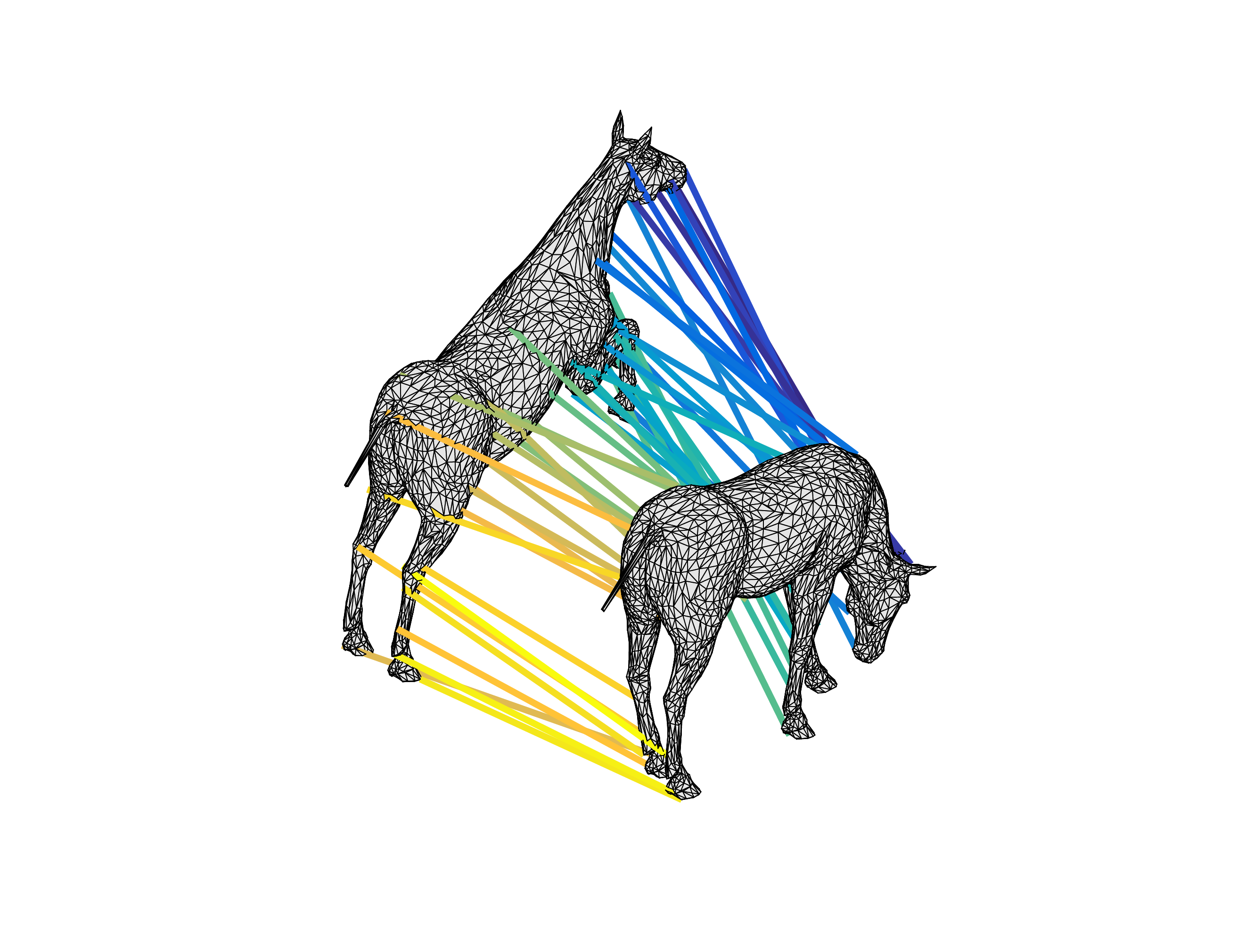}
\includegraphics[width=0.45\textwidth, trim={5cm 3.2cm 5cm 2cm},clip]{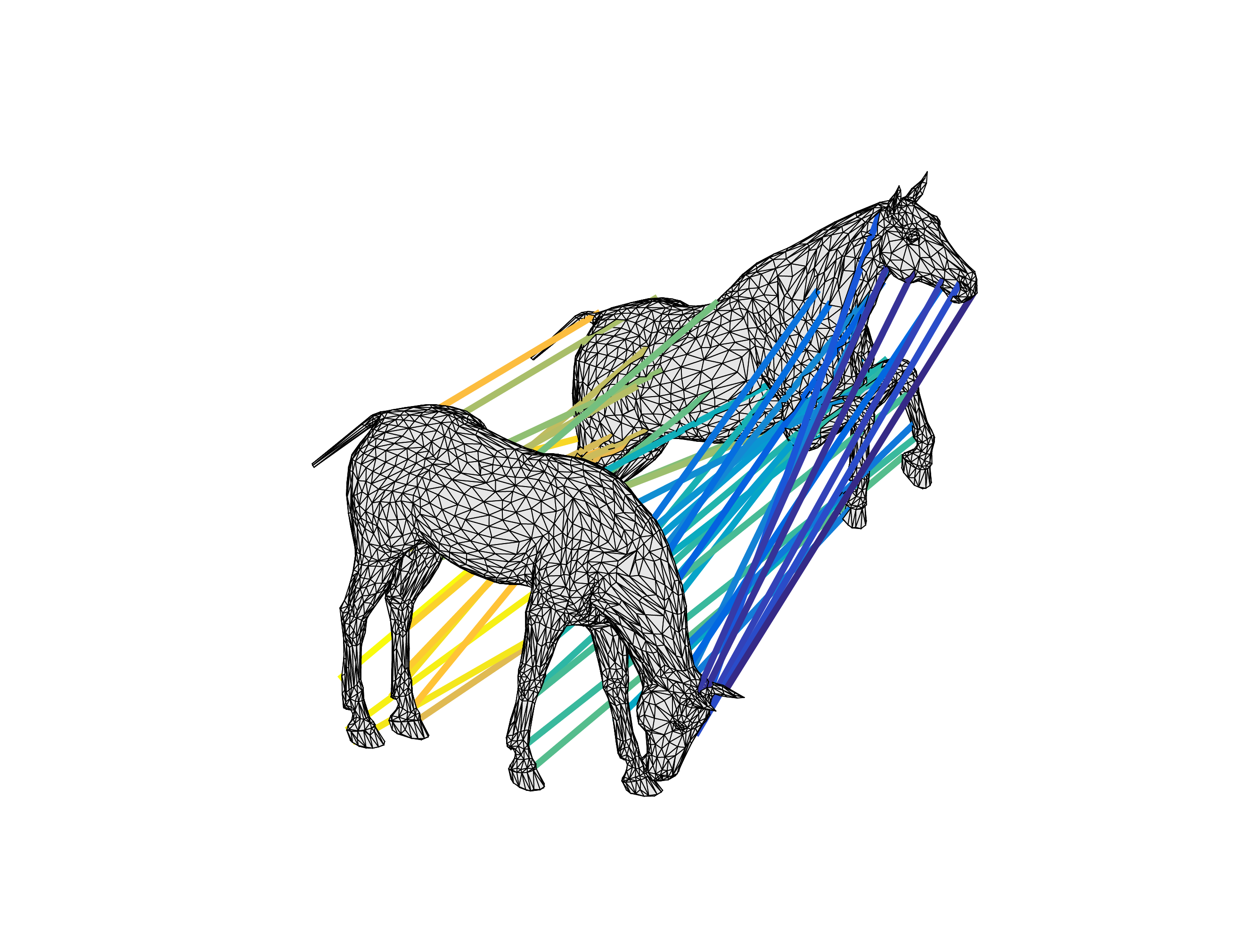}
\includegraphics[width=0.4\textwidth,  trim={5cm 8cm 5cm 8.5cm},clip]{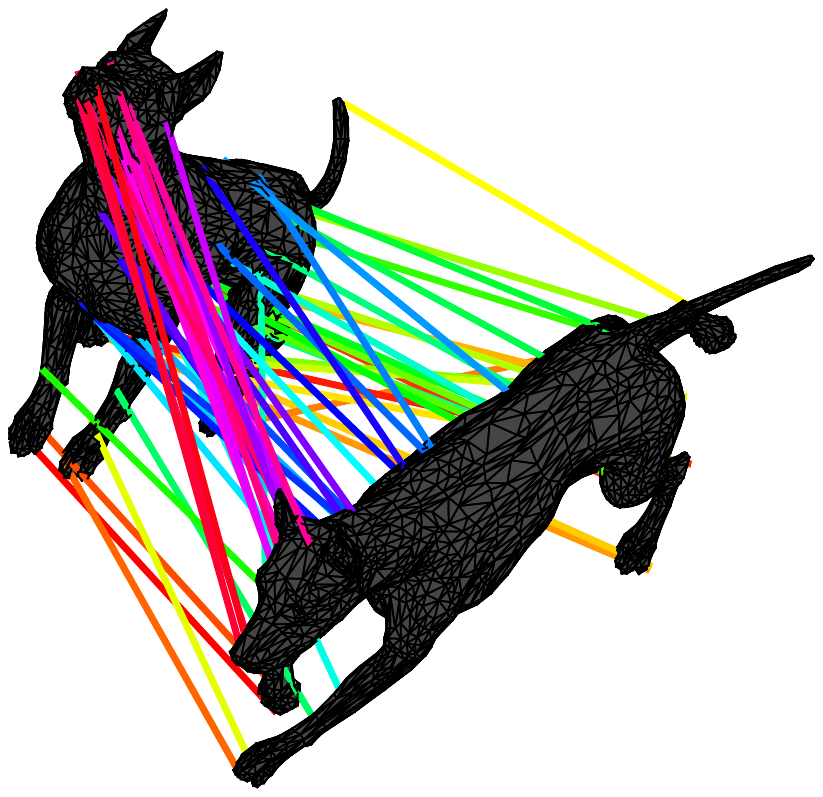}
\includegraphics[width=0.29\textwidth,  trim={5cm 6cm 5cm 8.2cm},clip]{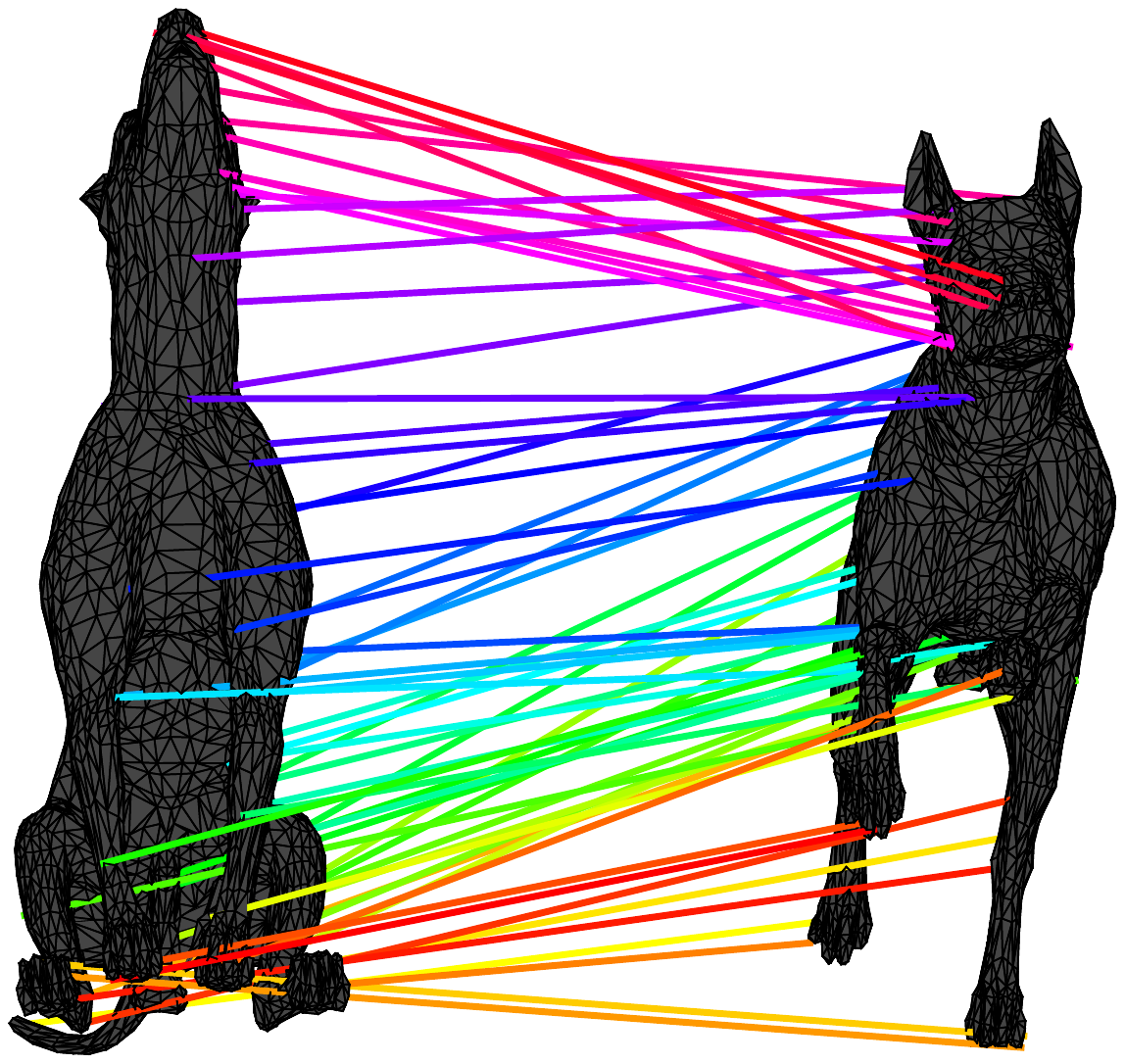}
\includegraphics[width=0.29\textwidth,  trim={5cm 6cm 5cm 8.5cm},clip]{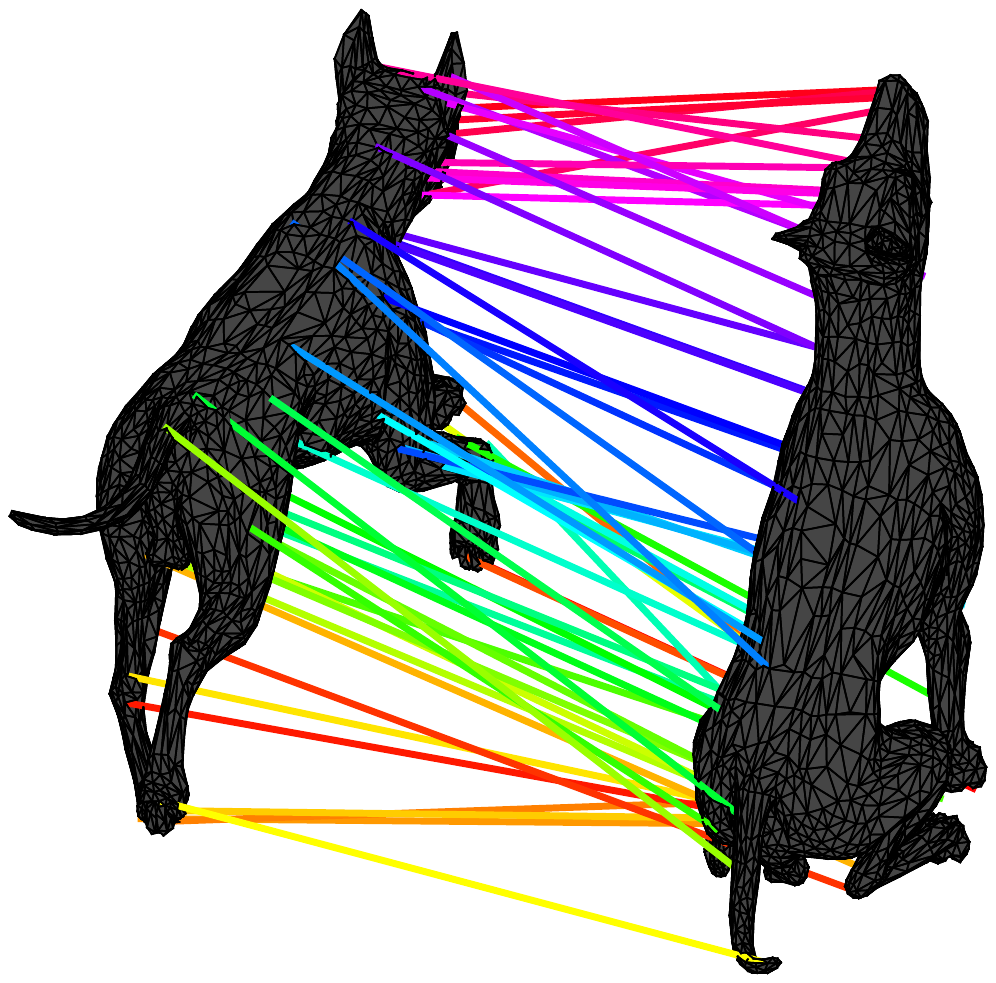}

\caption{\label{maps} We run GHMatch on 50 points sampled at random from the surfaces, and 60 points for the dogs. The pairwise distances we consider in each figure are the geodesic distances in the mesh. Images that are horizontally aligned correspond to different angles of the same correspondence between the 3D models. Note that for the dogs, the correspondence matches the left legs of one dog with the right legs of the other one (this is a consequence of the symmetry). Also note that there are small imperfections, like the tail of one dog matching with one leg of the other one (this is a consequence of randomly sampling and obtaining different number of points from different dogs tails). The algorithm with 50 sample points runs in less than 6 minutes on a standard 2013 MacBook Air.}
\end{figure}
In order to evaluate our non-convex optimization formulation of the
registration problem, we consider the 3D models from~\cite{Bronstein}
and we sample random points from each model.  We run the rank 1
augmented lagrangian optimization from Algorithm~\ref{algorithm},
using Matlab's implementation of the reflective trust region algorithm
to run the step 
\[
\mathbf y \gets \arg\min_{0\leq \mathbf y\leq 1}
\mathcal L(\mathbf y, \lambda_k, \sigma_k).
\]
In Figure~\ref{maps} we depict the resulting map between corresponding figures.

By design we know $\sigma_{k}\to \infty$ as $k$ increases, which
guarantees that $\|A\mathbf y_k -b\|\to 0$. However, there is no
theoretical guarantee that $\mathbf y_k$ will converge to a sparse
vector with entries in $\{0,1\}$.  However, we have observed in our numerical simulations
that $\mathbf y_k$ converges to a fairly sparse vector where most of
the entries are close to $0$ or $1$ provided a good choice of the
parameters $\mu$ and $\sigma_0$ (see Figure
\ref{error_plots}).  Moreover, regardless of the choice of the parameters, we
find that our thresholding step in the algorithm often obtains a map that
is bijective.  

\begin{remark}
As an alternative to the selection of parameters $\sigma_0$ and
$\lambda$, a thresholding step could be introduced inside the main
iteration (lines 5 to 9) to enforce sparsity of the resulting $\mathbf
y$.  
\end{remark}

\section*{Acknowledgements}
The authors would like to thank Facundo M\'emoli for insightful
discussions about the Gromov-Wasserstein distance, Tingran Gao for
pointing out useful references, Yaron Lipman for having his dataset
and code freely available, Stephen Wright for his advice on the
implementation of our non-convex algorithm and Marcos Cossarini for pointing out a misstatement in a previous version of this manuscript.  
\bibliography{gh}
\bibliographystyle{alpha}
\end{document}